\documentclass[letterpaper,12pt]{amsart}
\usepackage{color}
\pdfoutput=1
\textwidth=16.00cm 
\textheight=22.00cm 
\topmargin=0.00cm
\oddsidemargin=0.00cm 
\evensidemargin=0.00cm 
\headheight=0cm 
\headsep=0.5cm

\usepackage{subfigure}

\title{Geodesics in CAT(0) Cubical Complexes}

\author{Federico Ardila}
\address{Department of Mathematics \\
San Francisco State University, San Francisco, CA 94132}
\email{federico@math.sfsu.edu}
\author{Megan Owen}
\address{Department of Mathematics \\
University of California Berkeley, Berkeley, CA 94720}
\email{maowen@berkeley.edu}
\author{Seth Sullivant}
\address{Department of Mathematics \\
North Carolina State University, Raleigh, NC 27695}
\email{smsulli2@ncsu.edu}

\thanks{Ardila, Owen, and Sullivant were partially supported by the U.S. National Science Foundation under awards DMS-0635449,  DMS-0801075, DMS-0840795, DMS-0954865, and DMS-0956178.  Ardila was also partially supported by an SFSU Presidential Award and Sullivant was also partially supported by the David and Lucille Packard Foundation.
}

\date{}

\textheight=610pt

\usepackage{latexsym,array,delarray,amsthm,amssymb,graphicx}


\theoremstyle{plain}
\newtheorem{thm}{Theorem}[section]
\newtheorem{lemma}[thm]{Lemma}
\newtheorem{prop}[thm]{Proposition}
\newtheorem{cor}[thm]{Corollary}

\theoremstyle{definition}
\newtheorem{defn}[thm]{Definition}
\newtheorem{ex}[thm]{Example}
\newtheorem{pr}[thm]{Problem}
\newtheorem{alg}[thm]{Algorithm}

\newtheorem{remark}[thm]{Remark}

\theoremstyle{remark}

\newcommand{\zz}{\mathbb{Z}}

\newcommand{\rr}{\mathbb{R}}



\newcommand{\calc}{\mathcal{C}}

\newcommand{\calH}{\mathcal{H}}

\newcommand{\calo}{\mathcal{O}}


\newcommand{\ind}{\mbox{$\perp \kern-5.5pt \perp$}}

\newcommand{\bs}{\backslash}

\providecommand{\norm}[1]{\lVert#1\rVert}

\numberwithin{equation}{section}

\begin{document}

\maketitle

\begin{abstract}
We describe an algorithm to compute the geodesics in an arbitrary CAT(0) cubical complex.  
A key tool is a correspondence between cubical complexes of global non-positive curvature and \emph{posets with inconsistent pairs}. This correspondence also gives an explicit realization of such a complex as the state complex of a reconfigurable system, and a way to embed any interval in the integer lattice cubing of its dimension.

\end{abstract}


\section{Introduction}

A \emph{cubical complex} is a polyhedral complex where all cells are cubes and all attaching maps are injective. Informally speaking, it is just like a simplicial complex, except that the cells are cubes instead of simplices. Every cubical complex has an intrinsic metric induced by the Euclidean $L^2$ metric on each cube.  A polyhedral complex is \emph{CAT(0)} if and only if it is globally non-positively curved. 
This implies that there is a unique local geodesic between any two points.  CAT(0) cubical complexes make frequent appearances in mathematics and its applications, for instance in geometric group theory, in the theory of reconfigurable systems, and in phylogenetics.  The main goal of this paper is to describe an algorithm for computing geodesics in CAT(0) cubical complexes.

A prototypical example of CAT(0) cubical complexes comes from ``reconfigurable systems", a broad family of systems which change according to local rules. Examples include robotic motion planning, the motion of non-colliding particles in a graph, and phylogenetic tree mutation, among many others.  In many reconfigurable systems, the \emph{parameter space} of all possible positions of the system naturally takes the shape of a CAT(0) cubical complex $X$ \cite{GhristPeterson07}. Finding geodesics in $X$ is equivalent to finding the optimal way to get the system from one position to another one under this metric.

Our algorithm appears in Section \ref{sec:geod}; before we can describe it, we need a number of preliminary steps.  First, in Section \ref{sec:combo}, we develop a new combinatorial description of CAT(0) cubical complexes in terms of distributive lattices of partially ordered sets (posets) with inconsistent pairs.  In Section \ref{sec:applications} we give two applications of this description. In Section \ref{sec:interval} we develop the notion of an interval between two cubes in a CAT(0) cubical complex.  The interval contains the geodesic between any point in the first cube and any point in the second cube.  The interval is a new CAT(0) cubical complex which corresponds to an ordinary distributive lattice (no inconsistent pairs).  This interpretation allows us to prove the conjecture \cite{NSW} that a $d$-dimensional interval embeds in the lattice cubing $\zz^d$. In Section \ref{sec:realizable} we review reconfigurable systems and their state complexes, and show that our combinatorial description of a CAT(0) cubical complex provides a way to realize it as such a state complex.

In Section \ref{sec:combin.geodesics} we introduce \emph{valid cube sequences}, the paths of cubes that a geodesic can pass through. In Section \ref{sec:zero}, we describe locally the geodesics passing through a given set of cubes. In Section \ref{sec:noShortcut} we give a criterion to determine whether the geodesic through a given sequence of cubes is the global geodesic; and if it is not, to detect a better sequence of cubes for it to pass through. This description of geodesics and check for improvement generalizes the main criteria from \cite{OwenProvan11} for determining geodesics in phylogenetic tree space.  This provides a characterization of the geodesic between two points. 

After reviewing touring problems in Section \ref{sec:touring}, we describe our algorithm in Section \ref{sec:geod}. We compute the shortest path through a given sequence of cubes, translating this into a fixed order touring problem, which can be solved in polynomial time using semidefinite programming \cite{PolishchukMitchell05}.   We then use the criteria of Sections \ref{sec:zero} and \ref{sec:noShortcut} to determine whether this is the global geodesic, and if it is not, to improve it.

Unlike in the case of tree space, where the resulting touring problem can be solved in linear time \cite{Owen09} and all break points have constructible coordinates (obtained by a sequence of quadratic field extensions), the touring problem for general CAT(0) cubical complexes has intrinsic algebraic complexity.  In particular, geodesics in general CAT(0) complexes can have break points whose coordinates have nonsolvable Galois groups, implying that the iterative algorithms are probably essential for this problem, since there is no exact ``simple" formula for the geodesic.  We explore this algebraic complexity in Section \ref{sec:algebra}.   

\smallskip

\noindent \textbf{Related results in the literature.}
The problem of finding the shortest path between two points in some Euclidean region is very well studied in two dimensions.  In a number of general situations in two dimensions, the shortest path algorithm is polynomial.  For example, the Euclidean shortest path between polygonal obstacles in the plane can be computed in time $O(n \log n)$, where $n$ is the number of vertices in the obstacle polygons \cite{HershbergerSuri97}.  For a detailed survey, see \cite{Mitchell00}.  Recently, Chepoi and Maftuleac \cite{ChepoiMaftuleac10} gave a polynomial algorithm for computing the shortest path through a CAT(0) rectangular complex, in which each cell is 2--dimensional. 

In three or more dimensions, computing Euclidean shortest paths through a region with obstacles is NP-complete \cite{CannyReif87}.  Indeed, this problem is NP-complete in 3 dimensions even when the obstacles are restricted to being disjoint axis-aligned boxes \cite{MitchellSharir04}.  Ghrist and Lavalle \cite{GhristLavalle06} observed, however, that no example in \cite{MitchellSharir04} that is NP-hard is a CAT(0) space.  

All known polynomiality results in higher dimensions are for CAT(0) spaces.  For example, the shortest path through a \emph{cube-curve}, a sequence of face-connected 3-dimensional cubes that does not intersect itself, can be computed in linear time \cite{LiKlette06}. 
For general CAT(0) cube complexes, edge geodesics (geodesics along the edges of the cubes) are well understood, and easily constructed in terms of Niblo and Reeves's \emph{normal cube paths} \cite{NibloReeves98}. These play an important role in geometric group theory, where they are used to study groups acting on CAT(0) cube complexes. Much less is known about geodesics in the $L^2$-metric. An important, well understood example is the space of phylogenetic trees of Billera, Holmes, and Vogtmann \cite{BHV01}, where geodesics can be computed in polynomial time \cite{OwenProvan11}.  This result was generalized to any \emph{CAT(0) orthant space}; \emph{i.e.}, to any set of orthants arranged around a common origin that is CAT(0) \cite{MillerOwenProvan10}. To the best of our knowledge, our algorithm is the first to compute geodesics in an arbitrary CAT(0) cubical complex.

\medskip

\noindent \textbf{Acknowledgments.} We would like to acknowledge Rick Scott for valuable discussions on CAT(0) cubical complexes, and for an excellent talk at the Bay Area Discrete Mathematics Day in 2007 which first inspired Theorems \ref{th:poset} and \ref{th:embedding}.


\section{Combinatorial Geometry of CAT(0) Cubical Complexes}\label{sec:combo}

We begin by defining CAT(0) spaces, the spaces of global non-positive curvature that we are interested in. Let $X$ be a metric space where there is a unique geodesic (shortest) path between any two points. Consider a triangle $T$ in $X$ of side lengths $a,b,c$, and build a comparison triangle $T'$ with the same lengths in Euclidean plane. Consider a chord of length $d$ in $T$ which connects two points on the boundary of $T$; there is a corresponding comparison chord in $T'$, say of length $d'$. If for every triangle $T$ in $X$ and every chord in $T$ we have $d \leq d'$, we say that $X$ is \emph{CAT(0)}.

Testing whether a general metric space is CAT(0) is quite subtle. However, Gromov \cite{Gromov} proved that this is easier if the space is a cubical complex. In a cubical complex, the link of any vertex is a simplicial complex. We say that a simplicial complex $\Delta$ is \emph{flag} if it has no empty simplices; \emph{i.e.}, if any $d+1$ vertices which are pairwise connected by edges of $\Delta$ form a $d$-simplex in $\Delta$.

\begin{thm}[Gromov]
A cubical complex is CAT(0) if and only if it is simply connected and the link of any vertex is a flag simplicial complex.
\end{thm}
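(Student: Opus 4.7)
The plan is to prove the theorem via a local-to-global reduction, followed by a combinatorial analysis of links. The key tool is the Cartan--Hadamard theorem for CAT(0) spaces: a complete geodesic metric space is CAT(0) if and only if it is simply connected and \emph{locally} CAT(0), meaning every point has a convex neighborhood satisfying the CAT(0) inequality. Since a cubical complex with the piecewise Euclidean metric is a complete geodesic metric space (this requires the standard Bridson--Haefliger verification that there are only finitely many isometry types of cells), the theorem will follow once we characterize local CAT(0)-ness combinatorially.

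The first main step is to reduce local CAT(0)-ness to a condition on the vertex links. The interior of any open cell of a cubical complex is locally isometric to Euclidean space, so local CAT(0)-ness is automatic away from the boundary of cells, and by an inductive argument on codimension it suffices to check the condition at vertices. The standard "cone" computation (as in Berestovskii, Bridson--Haefliger) shows that a small metric neighborhood of a vertex $v$ is isometric to an open cone on the link $\mathrm{Lk}(v)$, and such a cone is CAT(0) if and only if $\mathrm{Lk}(v)$ is CAT(1). In a cubical complex, all corner angles are right angles, so each link is naturally an \emph{all-right spherical simplicial complex}: a piecewise spherical complex in which every simplex is a standard right-angled spherical simplex with all edges of length $\pi/2$.

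So the theorem reduces to the purely combinatorial statement: \emph{an all-right spherical simplicial complex $\Delta$ is CAT(1) if and only if it is flag}. For the "only if" direction, suppose $\Delta$ has an empty simplex, i.e.\ vertices $v_0,\dots,v_d$ pairwise joined by edges but not spanning a $d$-simplex. The boundary of the corresponding missing simplex would be an isometrically embedded round sphere of circumference $< 2\pi$ that is not null-homotopic through short loops in $\Delta$, producing a pair of antipodal points joined by two distinct local geodesics of length $\pi/2$, which contradicts uniqueness of short geodesics in a CAT(1) space. For the "if" direction, one uses Gromov's link condition inductively: assuming flagness, one shows that every closed loop in $\Delta$ of length less than $2\pi$ bounds a disk that can be developed isometrically into the round sphere $S^2$, verifying the CAT(1) inequality by a comparison argument. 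The inductive step again reduces to links in $\Delta$, which remain all-right and flag, so the induction closes.

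The main obstacle is the last combinatorial step, specifically the "if" direction for CAT(1)-ness. The forward direction is a short contradiction, and the reduction to links is routine, but showing that flagness actually forces the CAT(1) inequality requires either invoking Ballmann's "short loop" criterion for CAT(1) (no locally geodesic loop of length less than $2\pi$) combined with the observation that in an all-right spherical complex the shortest closed local geodesic has length at least $2\pi$ precisely when there is no empty simplex, or else constructing the geodesic filling by hand via induction on combinatorial distance. I would proceed by citing Ballmann's criterion, reducing to the nonexistence of short closed local geodesics, and then giving the combinatorial argument that any such geodesic must traverse a sequence of edges whose endpoints would form a missing simplex, contradicting flagness.
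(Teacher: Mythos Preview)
The paper does not prove this theorem; it is stated with attribution to Gromov and used as a black box. Your outline is the standard textbook argument (Bridson--Haefliger, Chapter II.5, or the appendix to Davis's book on Coxeter groups), so there is no paper proof to compare against.

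Two points need correction. In the ``only if'' direction, your claim that a minimal empty simplex on $v_0,\dots,v_d$ yields ``a round sphere of circumference $<2\pi$'' and ``two distinct local geodesics of length $\pi/2$'' only makes sense for $d=2$, where the three edges form a closed local geodesic of length $3\pi/2$ (and even there the numbers are off). For $d\ge 3$ the all-right $\partial\Delta^d$ is not a round sphere, and an isometrically embedded copy of it does not by itself produce the contradiction you describe. The standard fix is induction on dimension: CAT(1) passes to vertex links, which are all-right and hence flag by the inductive hypothesis; combined with the $d=2$ case (no empty triangles) this forces $\Delta$ to be flag, using that a simplicial complex is flag iff it has no empty triangle and all its vertex links are flag. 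In the ``if'' direction, your proposed endgame---that a short closed local geodesic ``must traverse a sequence of edges whose endpoints would form a missing simplex''---does not work as stated: local geodesics in an all-right spherical complex generally run through interiors of simplices, not along the $1$-skeleton. The actual crux is a geometric lemma: if every vertex link of $\Delta$ is CAT(1), then any local geodesic meeting the open star of a vertex $v$ intersects the closed star $\overline{\mathrm{st}}(v)$ in an arc of length at least $\pi$; from this one deduces that every closed local geodesic has length $\ge 2\pi$, and the criterion you cite then applies. Flagness enters only via the induction (ensuring that links are flag, hence CAT(1) by the inductive hypothesis); the length estimate itself is not a missing-simplex bookkeeping argument.
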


We start by giving a global version of Gromov's theorem: we propose a combinatorial description of CAT(0) cube complexes, which is very similar but more compact than the one given by Sageev \cite{Sageev} and Roller \cite{Roller}. We first offer an informal description of our construction, and then prove its correctness by showing that it is equivalent to theirs.

\begin{figure}[h]
\begin{center}
\includegraphics[width=2.5in]{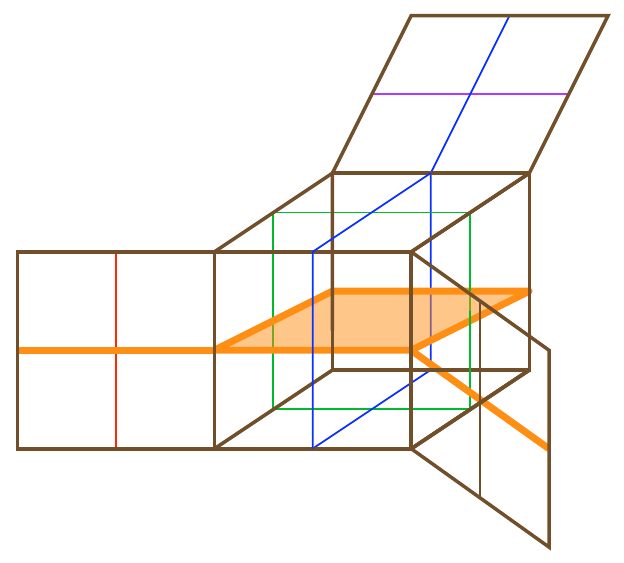} 
\caption{A cube complex $X$ and its hyperplanes.}
\label{fig:hyperplanes}
\end{center}
\end{figure}

We first describe the hyperplanes in a CAT(0) cube complex $X$, following \cite{Sageev}.
Given an $n$-cube $Q$ in $X$ and an edge $e$ in $Q$, let $Q(e)$ denote the $(n-1)$-dimensional subcube obtained by intersecting $Q$ with the hyperplane orthogonal to $e$ which passes through the midpoint of $e$. This defines an equivalence relation on the edges of each cube $Q$ by setting $e \sim e'$ if $Q(e)=Q(e')$. We extend this transitively to an equivalence relation on all the edges of $X$. Each equivalence class $\textsf{e}$ of edges defines a \emph{hyperplane} $H(\textsf{e}) = \cup Q(e)$ where the union is over all $e \in \textsf{e}$ and all cubes $Q$ containing $e$. Let $\calH(X)$ be the set of hyperplanes of $X$.
Figure \ref{fig:hyperplanes} shows a cube complex with six hyperplanes; one of the hyperplanes is shaded.

Now fix a vertex $v$ of $X$. We call $(X, v)$ a \emph{rooted} CAT(0) cubical complex.
To each hyperplane $H$ one can associate a unique vertex $h$ which is closest to $v$ and on the opposite side of $H$. These elements are highlighted in Figure \ref{fig:bijection} and numbered $1,2,3,4,5,6$. We regard the vertices of $X$ as a poset $L(X,v)$ with minimum element $v$, by decreeing that $u_1 < u_2$ if there is an edge geodesic (a shortest path along the edges of $X$) from vertex $v$ to vertex $u_2$ which passes through $u_1$. Observe that the marked elements are the elements which cover exactly one element in the poset. In fact, from the proof of the upcoming Theorem \ref{th:poset}, it follows  that $L(X,v)$ is a meet-semilattice, so we have a bijection between the hyperplanes of $X$ and the join irreducible elements of $L(X,v)$.

\begin{figure}[h]
\begin{center}
\includegraphics[width=3.5in]{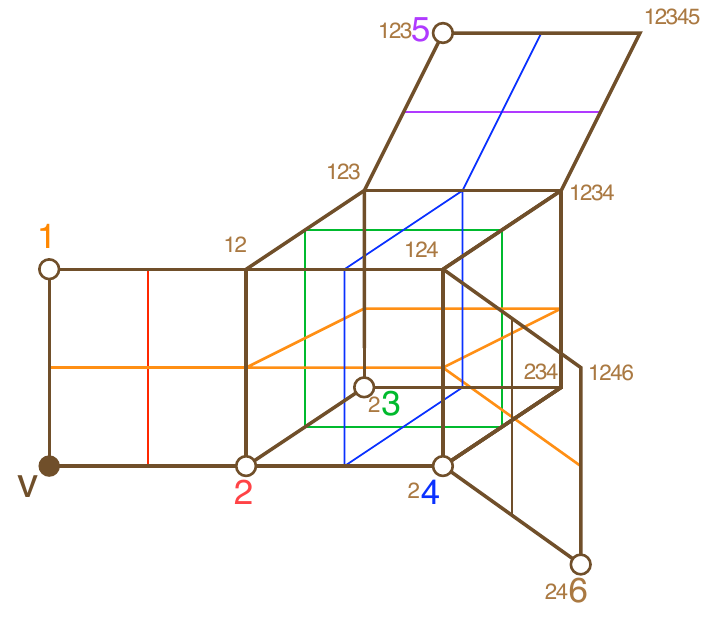} 
\caption{A cube complex $X$ and its join irreducible elements.}
\label{fig:bijection}
\end{center}
\end{figure}

The situation is now reminiscent of Birkhoff's theorem, which gives a bijection between distributive lattices and posets, as we now recall. An \emph{order ideal} or \emph{downset} $I$ of $P$ is a subset of $P$ such that $a \leq b$ and $b \in I$ imply $a \in I$. For any poset $P$, the set $J(P)$ of order ideals of $P$, partially ordered by inclusion, is a distributive lattice. Conversely, if $L$ is a distributive lattice and $P$ is the set of join-irreducible elements, ordered as in $L$, then $L \cong J(P)$. One suspects that this analogy is particularly relevant in our situation since CAT(0) cube complexes ``look like" distributive lattices.

We wish to imitate Birkhoff's theorem, and ask whether one can determine the cube complex $X$ from the join-irreducible elements of the poset $L(X,v)$, or equivalently from its hyperplanes $\calH$. There is a natural way of labelling each vertex $x$ of $X$: its index indicates which join-irreducible elements are less than or equal to $x$ in the poset $L(X,v)$. Equivalently, its index indicates which hyperplanes separate $x$ from $v$. This is illustrated in Figure \ref{fig:bijection}.

The label of each vertex is indeed an order ideal in the subposet $P(X,v)$ of join-irreducible elements of $L(X,v)$. However, not every order ideal arises in this way; so the question becomes: Which order ideals of $P(X,v)$ appear as vertex labels in $X$? The key observation is that certain pairs of hyperplanes cannot separate the same vertex $x$ from $v$. Thus, for example, no vertex label in Figure \ref{fig:bijection} simultaneously contains the pair of indices $3$ and $6$, because it is impossible to cross both hyperplanes 3 and 6 starting from $v$. We keep track of those pairs.

The following definition makes this precise. 
Recall that a poset $P$ is \emph{locally finite} if every interval $[i, j] = \{k \in P \, : \, i \leq j \leq k\}$ is finite, and it has \emph{finite width} if every antichain (set of pairwise incomparable elements) is finite.

\begin{defn}
A \emph{poset with inconsistent pairs} is a locally finite poset $P$ of finite width, together with a collection of \emph{inconsistent pairs} $\{p,q\}$, such that:
\begin{enumerate}
\item
If $p$ and $q$ are inconsistent, then there is no $r$ such that $r \geq p$ and $r \geq q$.
\item
If $p$ and $q$ are inconsistent and $p' \geq p, q' \geq q$, then $p'$ and $q'$ are inconsistent.
\end{enumerate}
\end{defn}
In particular, notice that any two inconsistent elements must be incomparable.

The \emph{Hasse diagram} of a poset with inconsistent pairs is obtained by drawing the poset, and connecting each minimal inconsistent pair with a dotted line. An inconsistent pair $\{p,q\}$ is \emph{minimal} if there is no other inconsistent pair $\{p',q'\}$ with $p' \leq p$ and $q' \leq q$.

\begin{figure}[h]
\begin{center}
\includegraphics[width=1.5in]{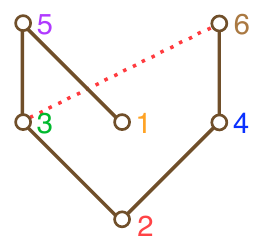} 
\caption{A poset with inconsistent pairs.}
\label{fig:poset}
\end{center}
\end{figure}

The hyperplanes of a rooted CAT(0) cube complex $(X,v)$ then form a poset with inconsistent pairs $P(X, v)$. For hyperplanes $i$ and $j$, we have that $i < j$ if, starting from $v$, one must cross hyperplane $i$ before crossing hyperplane $j$; and $i$ and $j$ are inconsistent if it is impossible to cross them both starting from $v$. This poset is shown in Figure \ref{fig:poset} for the rooted cube complex of Figure \ref{fig:bijection}.

\medskip

An \emph{antichain} $A$ of $P$ is a subset containing no two comparable elements. 
Order ideals or antichains which contain no inconsistent pair will be called \emph{consistent}; they will be particularly important to us. 

There is a bijection between consistent order ideals and consistent antichains: The maximal elements of a consistent order ideal $I$ form a consistent antichain $A=:I_{max}$, and $I$ can be recovered from $A$ as $I =  P_{\leq A} = \{p \in P \, | \, p \leq a \textrm{ for some } a \in A\}$.

\begin{defn}
If $P$ is a poset with inconsistent pairs, we construct the \emph{cube complex of $P$}, which we denote $X_P$. The vertices of $X_P$ are identified with the consistent order ideals of $P$.  There will be a cube $C(I,M)$ for each pair $(I, M)$ of a consistent order ideal $I$ and a subset $M \subseteq I_{max}$, where $I_{max}$ is the set of maximal elements of $I$. This cube has dimension $|M|$, and its vertices are obtained by removing from $I$ the $2^{|M|}$ possible subsets of $M$. These cubes are naturally glued along their faces according to their labels.
\end{defn}

For example,  if $P$ is the poset of Figure \ref{fig:poset} then $X_P$ is the complex of Figure \ref{fig:bijection}.

The following lemma is straightforward.

\begin{lemma}
Let $J$ be a consistent order ideal and let $N \subseteq J_{max}$. The faces of the cube $C(J,N)$ in the cubical complex $X_P$ are the $3^{|N|}$  cubes $C(J-N_1, N-N_1-N_2)$, where $N_1$ and $N_2$ are disjoint subsets of $N$. The maximal cubes in $X_P$ correspond to the maximal consistent antichains $A$ of $P$; they are of the form $C(P_{\leq A}, A)$.
\end{lemma}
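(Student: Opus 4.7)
The plan is to split the lemma into its two parts and handle each by direct combinatorial verification from the definition of $C(I,M)$.

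\textbf{Faces.} Combinatorially, $C(J,N)$ is an $|N|$-cube whose $2^{|N|}$ vertices are the order ideals $J - S$ for $S \subseteq N$; each $p \in N$ corresponds to one coordinate direction. A face of this cube is determined by a disjoint pair $N_1, N_2 \subseteq N$, where $N_1$ marks the directions ``forced removed'' and $N_2$ marks those ``forced kept,'' leaving $N - N_1 - N_2$ free. The vertex set of this face is $\{(J - N_1) - S' : S' \subseteq N - N_1 - N_2\}$. I would then verify that $(J - N_1,\, N - N_1 - N_2)$ is a valid labelling: $J - N_1$ is a consistent order ideal (only elements of $J_{max}$ were removed, so it is still a downset, and consistency is inherited from $J$); and $N - N_1 - N_2$ still consists of maximal elements of $J - N_1$ (maximality in $J$ persists after deleting other maximal elements). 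The total $3^{|N|}$ counts the assignments of each $p \in N$ to one of the three blocks.

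\textbf{Maximal cubes.} For the easy reduction, if $M \subsetneq I_{max}$ then $C(I, I_{max})$ contains $C(I,M)$ as a proper face (apply the first part with $N_1 = \emptyset,\ N_2 = I_{max} - M$), so maximality forces $M = I_{max}$; similarly $I = P_{\leq M}$. Set $A := M = I_{max}$. I claim $A$ must be a maximal consistent antichain: otherwise there is $p \notin A$ with $A \cup \{p\}$ a consistent antichain, and I would choose such a $p$ minimal in $P - I$, so that all predecessors of $p$ lie in $I$; incomparability with $A$ forces $p$ neither below nor above any element of $A$, hence $I' := I \cup \{p\}$ is a consistent order ideal with $I'_{max} = A \cup \{p\}$, and $C(I', I'_{max})$ properly contains $C(I,A)$, a contradiction. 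Conversely, given a maximal consistent antichain $A$, set $I := P_{\leq A}$; consistency of $I$ follows from the contrapositive of axiom (2) (inconsistent pairs below $A$ would force inconsistent pairs in $A$), and $I_{max} = A$. To see $C(I,A)$ is maximal I would reverse the face analysis: any cube $C(I', M')$ containing $C(I,A)$ as a face comes with disjoint $N_1', N_2' \subseteq M'$ satisfying $I = I' - N_1'$ and $A = M' - N_1' - N_2'$. The elements of $N_1'$ lie in $I'_{max}$, are disjoint from $I$, hence incomparable to $A$, and are consistent with $A$ (being in the consistent ideal $I'$); thus $A \cup N_1'$ is a consistent antichain, forcing $N_1' = \emptyset$ by maximality of $A$, and an analogous argument gives $N_2' = \emptyset$.

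\textbf{Main obstacle.} The routine verifications are short, but the subtle point is the forward direction for maximal cubes: turning the hypothetical enlargement of the antichain $A$ into an actual enlargement of the cube $C(I,A)$. An arbitrary witness $p$ with $A \cup \{p\}$ consistent may sit high above $I$, so that adjoining $p$ alone does not produce an order ideal. The fix is to descend to a minimal element of $P - I$ lying below $p$ and check it inherits the required incomparability with $A$ and consistency with $A$ — the latter via the contrapositive of axiom (2), which transfers consistency downward. Once this reduction is in place, the rest of the argument is bookkeeping.
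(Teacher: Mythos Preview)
Your proof is correct. The paper does not actually supply a proof of this lemma, simply declaring it ``straightforward''; your argument fills in exactly the direct combinatorial verification the authors had in mind, and the one point you flag as subtle---descending from an arbitrary witness $p$ to a minimal element of $P\setminus I$ below it and checking that incomparability and consistency with $A$ are inherited---is handled correctly via axiom~(2).
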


For example, $\{1,3,4\}$ is a maximal consistent antichain in the poset of Figure \ref{fig:poset}, which corresponds to the maximal 3-cube $C(\{1,2,3,4\},\{1,3,4\})$ in Figure \ref{fig:bijection}.

Now we are ready for the main theorem of this section.

\begin{thm}\label{th:poset} (Combinatorial description of CAT(0) cubical complexes.)
There is a bijection between posets with inconsistent pairs and rooted CAT(0) cube complexes, given by the map $P \mapsto X_P$. 
\end{thm}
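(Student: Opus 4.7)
The plan is to reduce the theorem to Sageev's and Roller's classification of rooted CAT(0) cube complexes in terms of pocsets of half-spaces, which the authors invoke above. Specifically, I would show that the data of a PIP $P$ is equivalent to that of the (rooted) pocset of half-spaces of a CAT(0) cube complex, and that our construction $X_P$ recovers the cube complex produced by the Sageev--Roller construction applied to the corresponding pocset.

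First I would verify that for any rooted CAT(0) cube complex $(X,v)$ the hyperplane poset $P(X,v)$ is a PIP. Finite width follows because any antichain of pairwise consistent hyperplanes must meet in a common cube, and so is bounded by that cube's dimension; local finiteness of intervals comes from the fact that any two vertices are separated by finitely many hyperplanes along a minimal edge path. Axiom (1) says that two inconsistent hyperplanes cannot both be crossed from $v$, hence no hyperplane can dominate both. Axiom (2) is immediate from the definition of the crossing order.

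Conversely, from a PIP $P$ I would construct the Sageev--Roller pocset $\widetilde P = \{p^{-}, p^{+} : p \in P\}$ with involution $p^{-} \leftrightarrow p^{+}$: set $p^{-} \le q^{-}$ iff $q \le p$ in $P$, $p^{+} \le q^{+}$ iff $p \le q$ in $P$, $p^{-} \le q^{+}$ iff $\{p,q\}$ is inconsistent or $p=q$, and no other relations. The pocset axioms are a direct translation of the two PIP axioms. Consistent order ideals $I$ of $P$ are in bijection with discrete ultrafilters of $\widetilde P$ via $I \mapsto \{p^{-} : p \in I\} \cup \{p^{+} : p \notin I\}$: the order-ideal condition corresponds to downward closure, and pairwise consistency of $I$ corresponds to ultrafilter coherence. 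The cubes of the Sageev--Roller complex, which are ultrafilters together with a set of mutually transverse ("flippable") half-spaces, match our cubes $C(I,M)$ with $M \subseteq I_{\max}$, and the face relations agree with the preceding lemma. Sageev--Roller's theorem then furnishes both the CAT(0) conclusion and the bijectivity of the correspondence.

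The main obstacle will be this matching of cube structures, in particular showing that every subset $M \subseteq I_{\max}$ really does give a cube, and that no other cubes appear. Pairwise flippability of the half-spaces $\{p^{-} : p \in M\}$ (equivalently, pairwise consistency of elements of $I_{\max}$) must be promoted to joint flippability; this is the PIP analogue of the flag-link condition in Gromov's theorem, and it is essentially what axiom (1) guarantees, since removing any subset of $M$ from $I$ produces another consistent order ideal. Once this verification is in place, checking that the composite $P \mapsto X_P \mapsto P(X_P,\emptyset)$ is the identity is straightforward: elements of $P$ correspond to edge-equivalence classes of $X_P$ (hence hyperplanes), the crossing order is the given order, and inconsistency of $p$ and $q$ in $P$ is exactly the statement that no vertex label of $X_P$ contains both, which is the inconsistency in $P(X_P,\emptyset)$.
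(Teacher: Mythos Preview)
Your proposal is correct and follows essentially the same route as the paper: both reduce to the Sageev--Roller classification by setting up a dictionary between posets with inconsistent pairs and (acyclic/rooted) halfspace systems, then checking that $X_P$ agrees with the Sageev--Roller cube complex under this dictionary. One small slip: in your pocset $\widetilde P$ you should \emph{not} declare $p^{-} \le p^{+}$ (the ``or $p=q$'' clause), since in a halfspace system $h$ and $h^{*}$ are required to be incomparable; drop that clause and the rest goes through as you describe.
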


While it is possible to prove Theorem \ref{th:poset} directly, it will be easier to recall Sageev and Roller's description of CAT(0) cube complexes \cite{Roller, Sageev}, and prove that ours is equivalent to theirs.

A \emph{halfspace system} \cite{Sageev} or \emph{pocset} \cite{Roller} is a triple $H=(H, \leq, *)$ consisting of a set $H$ of \emph{halfspaces}, a locally finite poset $\leq$ on $H$ of finite width, and an order-reversing involution $*$, denoted $h \rightarrow h^*$, such that two halfspaces $h,k \in H$ (coming from different hyperplanes) satisfy at most one of the four inequalities:
\[
h \leq k, h \leq k^*, h^* \leq k, h^* \leq k^*.
\]
In particular $h$ and $h^*$ must be incomparable for any $h \in H$.

\begin{figure}[h]      
\begin{center}
\includegraphics[width=3in]{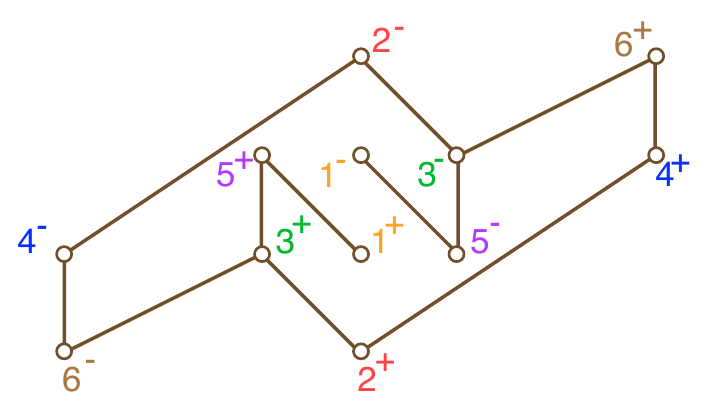} 
\caption{A halfspace system $H$.}
\label{fig:signedposet}
\end{center}
\end{figure}

Each element of $H$ is called a \emph{halfspace}, and two halfspaces are called \emph{nested} if one of the inequalities above holds, or \emph{transversal} if none of them hold. A pair $\{h, h^*\}$ is called a \emph{hyperplane}. Let $H^0$ be the set of hyperplanes. 

It will be useful for us to choose an arbitrary orientation for each hyperplane, and label the elements of the pair $h^+$ and $h^-$. We will do this from now on, and denote the hyperplane $h=\{h^+, h^-\}$.

Once again, this construction is motivated by the geometry of a CAT(0) cube complex $X$, as illustrated in Figure \ref{fig:signed.hyps}. Each combinatorial hyperplane $h=\{h^+, h^-\}$ represents a geometric hyperplane of $X$, which divides $X$ into two halfspaces $h^+$ and $h^-$, with an arbitrary choice of sign. The poset on $H$ represents the poset of containment of these geometric halfspaces. For example, Figure \ref{fig:signedposet} is the poset of the halfspaces of the cube complex of Figure \ref{fig:signed.hyps}.

\begin{figure}[h]
\begin{center}
\includegraphics[width=4in]{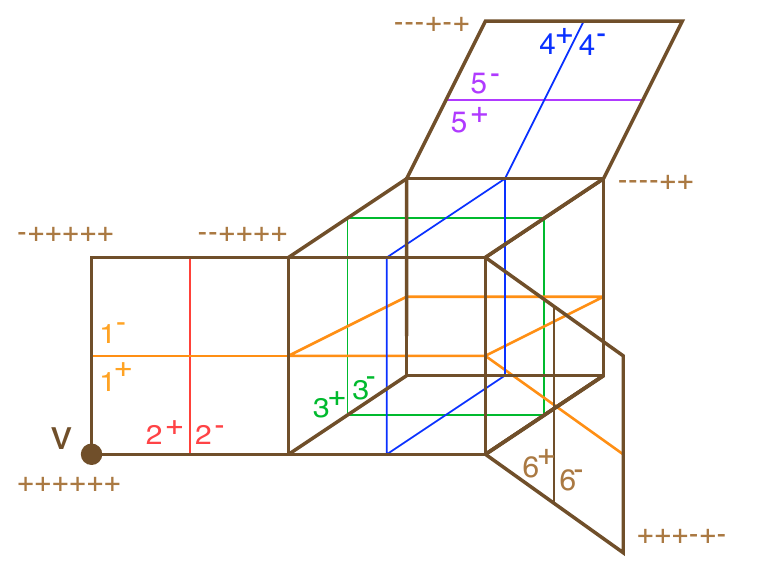} 
\caption{The CAT(0) cube complex $X_H$ of the
halfspace system $H$ of Figure \ref{fig:signedposet}
;  for brevity, vertex  $1^-2^-3^-4^+5^-6^+$ is denoted $---+-+$.}
\label{fig:signed.hyps}
\end{center}
\end{figure}

To a halfspace system $H$, Roller and Sageev associate a cube complex $X_H$. Its vertices $v = \{h^{v(h)}:h \in H^0\}$ correspond to the choices of a sign for each element of $H^0$ such that $h^s \not\leq k^{-t}$ for all $h^s$ and $k^t$ in $v$. Recall that an \emph{order filter} or \emph{upset} $I$ of $P$ is a subset of $P$ such that $a \geq b$ and $b \in I$ imply $a \in I$. Then each vertex $u$ of $X_H$ corresponds to an order filter of the poset $H$ which contains exactly one halfspace from each hyperplane. Geometrically, in the cube complex, these are the halfspaces containing $u$.

To describe the cubes that a vertex $u$ is in, regard $u$ as an order filter $F$ of the poset $H$. Choose any $d$ minimal elements of $F$.
These $d$ elements are pairwise transversal halfspaces, and one can
change their signs in any way to obtain the $2^d$ vertices of a
$d$-cube containing $u$.

Maximal cubes of $X_H$ correspond to maximal sets $A$ of pairwise transverse hyperplanes of $H$. The order filter $H_{\geq A} = \{h^s \in H \, : \, h^s \geq a^+ \textrm{ or } h^s \geq a^- \textrm{ for some } a\in A\}$ contains both halfspaces of each hyperplane in $A$ and exactly one halfspace of each hyperplane not in $A$. The vertices of the cube correspond to the $2^{|A|}$ ways to choose a halfspace for each hyperplane in $A$ and remove them from $H_{\geq A}$.

For example, $A=\{1, 3, 4\}$ is a maximal set of pairwise transverse hyperplanes in the halfspace system of Figure \ref{fig:signedposet}. It corresponds to the three-dimensional cube in Figure \ref{fig:signed.hyps}, with fixed vertex labels $2^-, 5^+$ and $6^+$ (which are above $A$ in $H$), and arbitrary choices of signs for hyperplanes $1, 3,$ and $4$.

\begin{thm}[Sageev \cite{Sageev}, Roller \cite{Roller}] \label{th:sageev}
If $H$ is a halfspace system then $X_H$ is a CAT(0) cube complex. Conversely, every CAT(0) cube complex arises in this way from a halfspace system.
\end{thm}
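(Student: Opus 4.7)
The plan is to prove both implications of the bijection using Gromov's criterion as the main tool. For the forward direction, given a halfspace system $H$, I would first verify that $X_H$ is a well-defined cube complex: the vertices described (order filters meeting each hyperplane exactly once) do exist, and the cubes $C_A$ indexed by transversal antichains glue consistently along faces, because a face of $C_A$ corresponds to freezing the signs of some subset of $A$, which produces exactly the cube associated to the smaller transversal antichain. Then I would verify Gromov's two conditions. For the flag link condition at a vertex $u$ viewed as an order filter $F_u$, edges incident to $u$ correspond to minimal elements of $F_u$, and $d$ of them $m_1,\ldots,m_d$ span a $d$-cube iff they are mutually transversal; the pocset axiom — that distinct hyperplanes satisfy at most one of the four inequalities — guarantees precisely that pairwise transversality implies mutual transversality, since signs of pairwise transversal halfspaces can be flipped independently without breaking the filter condition. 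Hence any clique in the link is a simplex, so the link is flag.

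For simple connectivity, I would induct on the number of hyperplanes, which is finite for the local van Kampen patches one needs. Pick a halfspace $h$ which is minimal in $\leq$; then the halfspace $X_H^-$ corresponding to $h^-$ is itself the cube complex of the halfspace system $H \setminus \{h, h^*\}$ (with halfspaces incomparable to $h$ restricted appropriately), and similarly for $X_H^+$; their intersection is the ``hyperplane'' $X_H^0$, which is the cube complex of the pocset of halfspaces transverse to $h$. By induction all three pieces are simply connected, and a van Kampen argument gives simple connectivity of $X_H = X_H^+ \cup X_H^-$.

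For the converse, given a CAT(0) cube complex $X$, I would use the hyperplane construction $\mathcal{H}(X)$ recalled earlier in the section. The crucial geometric input is that each hyperplane $\textsf{h}$ separates $X$ into exactly two components $h^+$ and $h^-$; this follows because Gromov's conditions imply the hyperplane $H(\textsf{e})$ is itself a CAT(0) cube complex (its link at any vertex inherits the flag property), and any edge loop in $X$ crosses each hyperplane an even number of times by a parity argument using 2-cells (squares) and simple connectivity. Having obtained the two-sidedness, define $\leq$ by containment on the resulting halfspaces and $*$ by swapping sides; the pocset axiom follows because, if two distinct hyperplanes satisfied two of the four containment inequalities, the local configuration of four halfspaces in a square face around a transversal intersection would produce a non-separating picture, contradicting two-sidedness. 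The map $v \mapsto \{h^s : v \in h^s\}$ sends vertices of $X$ to order filters of $\mathcal{H}(X)$ meeting each hyperplane once, extends to cubes (pairwise transversal hyperplanes meeting at $v$ bound a cube iff they do so in $X$), and yields the inverse isomorphism $X \cong X_{\mathcal{H}(X)}$.

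The main obstacle is the two-sidedness/separation property of hyperplanes in the converse direction: it is the geometric fact that powers the entire pocset structure, and its proof requires both Gromov conditions simultaneously. The induction for simple connectivity in the forward direction is the second delicate step, since one must verify that cutting along a minimal hyperplane produces two halves whose intersection is itself a CAT(0) cube complex before invoking van Kampen.
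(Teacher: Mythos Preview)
The paper does not prove this theorem. Theorem~\ref{th:sageev} is stated with attribution to Sageev~\cite{Sageev} and Roller~\cite{Roller} and then used as a black box: the authors' own contribution is Theorem~\ref{th:poset}, whose proof reduces the poset-with-inconsistent-pairs description to the halfspace-system description and then invokes Theorem~\ref{th:sageev}. So there is nothing in the paper to compare your proposal against line by line.

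That said, your sketch is a reasonable outline of how the Sageev--Roller result is actually established in the cited sources, and it identifies the two genuine pressure points: the flag condition on links follows cleanly from the pocset axiom (pairwise transversality of minimal halfspaces at a vertex gives a cube), and in the converse direction the separation/two-sidedness of hyperplanes is the geometric fact that makes the halfspace poset well defined. Two cautions if you want to turn this into a full proof. First, your induction on the number of hyperplanes for simple connectivity only works as stated for finite $H$; for infinite halfspace systems one typically argues that any loop lies in a finite convex subcomplex (using finite width and local finiteness) before inducting, or one proves contractibility directly via a combing from a basepoint. Second, in the converse, the assertion that ``any edge loop crosses each hyperplane an even number of times'' requires that squares in $X$ are embedded and that the complex is simply connected; you should make explicit that this parity argument is carried out by filling the loop with a disc of $2$-cells and checking the statement on each square, which is where the CAT(0) hypothesis (not merely nonpositive curvature) is used.
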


Now we are ready to prove the main result of this section.

\begin{proof}[Proof of Theorem \ref{th:poset}.] 
Say a halfspace system $H$ is \emph{acyclic} if  it has no order relations of the form $a^+ < b^-$. This is equivalent to saying that the all-positive set $\{h^+: h \in H^0\}$ is a vertex of $X_H$, which explains the terminology, borrowed from oriented matroid theory.

We proceed in three steps, as follows:

\noindent \emph{a}. Every CAT(0) cube complex can be obtained from an acyclic halfspace system. 

\noindent \emph{b}.
Posets with inconsistent pairs are in bijection with acyclic halfspace systems. 

\noindent \emph{c}.
The cube complex that Theorem \ref{th:poset} associates to a poset with inconsistent pairs is the same one that Theorem \ref{th:sageev} assigns to its corresponding acyclic halfspace system.

\medskip

\emph{a}.
Let $X=X_H$ be any CAT(0) cube complex, which comes from an arbitrary halfspace system $H$. Fix a vertex $v$ of $X_H$, and reverse the labels of $h^+$ and $h^-$ for each hyperplane $h$ such that $h^+\in v$.  The resulting halfspace system $H'$ is acyclic, and it is clear that $X_H$ and $X_{H'}$ are equal up to the aforementioned relabeling.

\medskip

\emph{b}.
Let $H$ be an acyclic halfspace system, and let $v_0=\{h^+ : h \in H^0\}$ be the all-positive vertex of $X_H$.  Consider the poset on $H^0$ which one obtains by restricting the poset $(H, \leq)$ to $H^+$, and decree the pair $\{p,q\}$ to be inconsistent whenever $p^- < q^+$ in $H$. To see that $H^0$ is indeed a poset with inconsistent pairs, we need to check two things:

1. If $p$ and $q$ are inconsistent, then $p$ and $q$ have no common upper bound in $H^0$: If $r$ was such an upper bound, we would have $p^+ \leq r^+$ and $q^+ \leq r^+$ in $H$, which together with $p^- < q^+$ would give $p^-,p^+ \leq r^+$ contradicting the definition of a halfspace system. 

2. If $p$ and $q$ are inconsistent, any $p', q' \in H^0$ with $p' \geq p$ and $q' \geq q$ are inconsistent: We have $(p')^- \leq p^-$ by the order reversing involution, $p^- \leq q^+$ by the inconsistency of $p$ and $q$, and $q^+ \leq (q')^+$ by assumption. Therefore $(p')^- \leq (q')^+$ as desired.

To recover $H$ from $H^0$, make a positive and a negative copy of each element of $H^0$, and introduce the following order relations on each 4-tuple $\{a^+, a^-, b^+, b^-\}$: If $a<b$ in $H^0$, let $a^+<b^+$ and $b^-<a^-$ in $H$. If $a$ and $b$ are inconsistent in $H^0$, let $a^- < b^+$ and $b^- < a^+$ in $H$. Finally, if $a$ and $b$ are incomparable but consistent in $H^0$, let them be transverse in $H$, so $\{a^+, a^-, b^+, b^-\}$ are pairwise incomparable in $H$.

For example, this bijection maps the poset with inconsistent pairs in Figure \ref{fig:poset} to the acyclic halfspace system of Figure \ref{fig:signedposet}.
 
\medskip

\emph{c}.
It remains to show that the complex $X_H$ of Roller and Sageev is isomorphic to the complex $X_{H^0}$ that we construct. First we establish a bijection between the vertices of these complexes. To each vertex $w = \{h^{w(h)}:h \in H\}$ of $X_H$ we associate the set $S(w) \subseteq H^0$ of hyperplanes $h$ such that $h^- \in v$. 
 To show that $S(w)$ is an order ideal of $H^0$, assume that $a \leq b$ in $H^0$ and $b \in S(w)$. This means that $b^- \in w$ and $a^+ \leq b^+$ in $H$, which implies $b^- \leq a^-$ in $H$. Since $H$ is an order filter, we have $a^- \in w$; that is, $a \in S(w)$.
Also, if $S(w)$ contained an inconsistent pair $\{a,b\}$, that would mean that $a^-< b^+$ for $a^-, b^- \in w$, a contradiction to the definition of the vertices of $X_H$. Therefore $S(w)$ is a consistent order ideal, \emph{i.e.}, a vertex of $X_{H^0}$. Conversely, given a consistent order ideal $S(w)$ it is clear how to recover $w$; and keeping in mind the acyclicity of $H$, the argument can be reversed to show that this $w$ is indeed a vertex of $X_H$. 

Finally, $X_H \cong X_{H^0}$ follows from the description of the maximal cubes in these two complexes.
\end{proof}

\begin{remark}
Our combinatorial description of CAT(0) cubical complexes is different from Sageev's in that it pays special attention to one particular vertex of the complex, and it breaks the symmetry between the positive and negative sides of a hyperplane. For the purposes of this paper, this feature of our description is advantageous.

In general, which description is more useful depends on the particular application. Ours is particularly helpful when there is a ``special" vertex, or when there is no harm in choosing one. In the Billera-Holmes-Vogtmann tree space, the origin might play that role. In a cubical complex acted on by a group, the ``special" vertex might represent the identity. In a reconfigurable system, we might choose a ``home" state. 
\end{remark}

Let $P$ be a poset with inconsistent pairs.  There are many ways to embed the associated cubical complex $X_P$ in a finite dimensional real vector space.  We will often make recourse to the following embedding, which we call the \emph{standard embedding}.

\begin{eqnarray*}
X_P & = &  \left\{(x_1, ..., x_s) \in [0,1]^{|P|} : i \prec j \textrm { and } x_i <1 \implies x_j = 0 , \right.\\
&  & \left. \, \, \, \textrm{ and if }(i,j)\textrm{ are inconsistent, then } x_i x_j = 0\right\}
\end{eqnarray*}

It is useful to literally think of the points in $X_P$ as assignments of a real number between 0 and 1 to each element of $P$. To move around $X_P$ starting at $v$, we start by assigning $0$s to all vertices, and then increase these numbers following two rules: no two incomparable elements are allowed to have non-zero numbers, and to increase the number in a certain position, one must have first increased all numbers in lower positions to $1$.

Note that the standard embedding has the property that any refinement of $P$, obtained by adding relations or inconsistent pairs, is a subcomplex under its standard embedding.


\section{Two  applications: embeddability and realizability}\label{sec:applications}

We now present two applications of the combinatorial description of CAT(0) cube complexes of  Section \ref{sec:combo}.

\subsection{Every $n$-dimensional interval of a CAT(0) cube complex embeds into $\zz^n$}\label{sec:interval}

In the section we define intervals in a CAT(0) cube complex, and use Theorem \ref{th:poset} to answer a question of Niblo, Sageev, and Wise \cite{NSW}.

\begin{defn}\label{def:interval}
Given two vertices $v,w$ of a CAT(0) cube complex $X$, let $[v,w]$ be the set of vertices which lie on at least one edge geodesic between $v$ and $w$.  Alternatively, these are the vertices which lie in every halfspace that contains $v$ and $w$. Let the \emph{interval} $X[v,w]$ in $X$ be the subcomplex of $X$ consisting of all cubes whose vertices are in $[v,w]$.

If $C$ and $D$ are two cubes in $X$, let $[C,D]$ be the set of vertices situated on at least one edge geodesic between a vertex in $C$ and a vertex in $D$.  Define the \emph{interval} $X[C,D]$ to be the subcomplex of $X$ consisting of all cubes whose vertices are in $[C,D]$.
 \end{defn}

\begin{lemma}\label{lem:vertices.are.order-ideals}
Consider an interval $[v,w]$ in a cube complex $X$. Root $X$ at $v$, and write $X = X_P$ for the corresponding poset with inconsistent pairs $P$.  Then the vertex $w$ corresponds to a consistent order ideal $Q$ of $P$, and the vertices of the interval $X[v,w]$ correspond to the order ideals of $Q$. In particular, $X[v,w] \cong X_Q$.
\end{lemma}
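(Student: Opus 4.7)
The plan is to use the alternative halfspace characterization in Definition \ref{def:interval} (that $[v,w]$ consists of the vertices lying in every halfspace containing both $v$ and $w$) together with the order-ideal bijection from Theorem \ref{th:poset}. Under that bijection, the root $v$ corresponds to the empty order ideal, and $w$ corresponds to some consistent order ideal of $P$, which we call $Q$. Each hyperplane of $X_P$ corresponds to an element $p \in P$, with the two halfspaces being $\{I : p \notin I\}$ (the one containing $v$) and $\{I : p \in I\}$. Consequently, the halfspaces containing both $v$ and $w$ are precisely those of the form $\{I : p \notin I\}$ with $p \notin Q$.

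A vertex $u$ of $X_P$, labeled by the consistent order ideal $I$, therefore lies in $[v,w]$ if and only if $p \notin I$ for every $p \notin Q$, which is the same as $I \subseteq Q$. I would then observe that because $Q$ is itself a downset of $P$, the consistent order ideals of $P$ that are contained in $Q$ are exactly the consistent order ideals of $Q$ regarded as a poset with inconsistent pairs in its own right: the downward closure in $P$ of any element of $Q$ already lies in $Q$, and the inconsistent pairs relevant to subsets of $Q$ are inherited verbatim from $P$. This identifies the vertex set of $X[v,w]$ with the vertex set of $X_Q$.

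The final step is to match the cube structure. Since $X[v,w]$ is defined as the subcomplex of $X_P$ generated by cubes whose vertices all lie in $[v,w]$, a cube $C(J,N)$ of $X_P$ lies in $X[v,w]$ if and only if each vertex $J \setminus N'$ (with $N' \subseteq N$) is contained in $Q$; taking $N' = \emptyset$, this forces $J \subseteq Q$, and the condition is then automatic. Conversely, whenever $J$ is a consistent order ideal contained in $Q$ and $N \subseteq J_{\max}$ (a set computed purely from the partial order on $J$, hence agreeing whether $J$ is viewed inside $P$ or inside $Q$), the cube $C(J,N)$ appears in both $X_P$ and $X_Q$. The two collections of cubes therefore coincide, yielding $X[v,w] \cong X_Q$. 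The step requiring the most care is the translation from the halfspace condition to the combinatorial condition $I \subseteq Q$, which depends on correctly identifying the two sides of each hyperplane under the order-ideal labeling; once that is in hand, the remainder is bookkeeping about downsets.
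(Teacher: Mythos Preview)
Your proof is correct, but it takes a different route from the paper's. The paper uses the edge-geodesic characterization in Definition~\ref{def:interval}: edge geodesics from $v$ to $w$ are identified with maximal chains $\emptyset = I_1 \subset \cdots \subset I_k = Q$ in the lattice of order ideals, so the vertices visited are exactly the order ideals contained in $Q$. You instead use the alternative halfspace characterization stated in the same definition, translating ``lies in every halfspace containing $v$ and $w$'' into the combinatorial condition $I \subseteq Q$ via the identification of hyperplanes with elements of $P$. Both arguments arrive at the same description of the vertex set; your approach is perhaps slightly more self-contained in that it does not invoke the (intuitively clear but unproved here) fact that edge geodesics correspond to saturated chains of order ideals, while the paper's approach avoids having to unpack the halfspace labeling. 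Your treatment of the cube structure is also more explicit than the paper's one-line ``It then follows that $X[v,w] \cong X_Q$ as well,'' which is a plus. One small point worth stating outright: since $Q$ is consistent, the poset $Q$ carries \emph{no} inconsistent pairs at all, so every order ideal of $Q$ is automatically consistent; you gesture at this but the paper says it explicitly, and it is the reason the vertex bijection goes through cleanly.
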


\begin{proof}
The vertex $w$ corresponds to a consistent order ideal $Q$ of $P$, and the edge geodesics from $v$ to $w$ correspond to the ways to build up the order ideal $Q$ by adding one element at a time; \emph{i.e.}, to the sequences of order ideals $\emptyset = I_1 \subset I_2 \subset \cdots \subset I_k = Q$ with $|I_i-I_{i-1}| = 1$ for all $i$. Since $Q$ contains no inconsistent pairs, all of its order ideals are consistent, and they are in bijection with the vertices of the interval $X[v,w]$. It then follows that $X[v,w] \cong X_Q$ as well.
\end{proof}

 \begin{remark}
The two notions of interval in Definition \ref{def:interval} coincide. For any two vertices $v$ and $w$ in  $X$, let $C$ and $D$ be the largest cubes in $X[v,w]$ containing $v$ and $w$, respectively. Then one easily checks that $X[v,w] = X[C,D]$.

Conversely, let $C$ and $D$ be cubes in $X$. We can choose vertices $v$ of $C$ and $w$ of $D$ which are farthest from each other, so that any hyperplane intersecting either $C$ or $D$ separates $v$ and $w$. Again, one easily checks that $X[C,D] = X[v,w]$.
 \end{remark}

\begin{defn}
Let the \emph{dimension} of an interval in a CAT(0) cube complex be the dimension of the largest cube in that interval.
 \end{defn}

In \cite{NSW}, Niblo, Sageev, and Wise asked for a proof of the following result, which had been given a flawed proof in the literature. Theorem \ref{th:poset} gives us a simple proof. (Brodzki, Campbell, Guentner, Niblo, and Wright \cite{BCGNW} and Chepoi and Maftuleac \cite{ChepoiMaftuleac10} independently found proofs similar to ours.)

\begin{thm}\label{th:embedding}
Any interval of dimension $n$ in a CAT(0) cube complex embeds as a subcomplex in the integer lattice cubing of $\rr^n$.
\end{thm}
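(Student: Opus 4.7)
The plan is to use Lemma~\ref{lem:vertices.are.order-ideals} to reduce the problem to a poset without inconsistent pairs, and then to exhibit an explicit coordinate map via Dilworth's theorem. Given an interval $X[v,w]$ of dimension $n$, I would root $X$ at $v$ and write $X = X_P$. The lemma then identifies $X[v,w]$ with $X_Q$, where $Q$ is the consistent order ideal of $P$ corresponding to $w$. Since every sub-order-ideal of a consistent order ideal is itself consistent, $Q$ may be regarded as an ordinary poset (its inconsistent pairs become irrelevant), and the cubes of $X_Q$ are then indexed by \emph{arbitrary} antichains of maximal elements of order ideals of $Q$. Consequently the dimension of $X_Q$ equals the width of $Q$, which must therefore be $n$. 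It suffices to embed $X_Q$ into the integer lattice cubing of $\rr^n$ whenever $Q$ has width $n$.

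For this, apply Dilworth's theorem to partition $Q$ into $n$ chains $C_1, \ldots, C_n$, and define a map $\phi$ on the vertices of $X_Q$ by
\[
\phi(I) = \bigl(|I \cap C_1|, \ldots, |I \cap C_n|\bigr) \in \zz_{\geq 0}^n.
\]
Injectivity is immediate: $I \cap C_i$ is a downset of the chain $C_i$, hence an initial segment determined by its cardinality, and $Q = \bigsqcup_i C_i$ recovers $I$ from these segments. A cube $C(I,M)$ of $X_Q$ is specified by an antichain $M = \{m_1, \ldots, m_k\}$ of maximal elements of $I$; the key point is that incomparable elements cannot share a chain, so the $m_j$ lie in \emph{distinct} chains $C_{i_j}$ of the Dilworth decomposition. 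Consequently the $2^k$ vertices $I - S$ for $S \subseteq M$ map bijectively onto the corners of the unit lattice cube between $\phi(I) - \sum_{j=1}^k e_{i_j}$ and $\phi(I)$, and extending $\phi$ affinely on each cube gives a cellular isometry.

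To finish, one verifies that the image is actually a subcomplex of the integer lattice cubing. This follows from the face description $C(I - N_1,\, M - N_1 - N_2)$ (for disjoint $N_1, N_2 \subseteq M$) given in the lemma following Definition~2.5: each such face of $C(I,M)$ maps to the corresponding sub-face of the ambient lattice cube. I do not anticipate a substantive obstacle here. The genuine content is concentrated in Lemma~\ref{lem:vertices.are.order-ideals}, which does the work of eliminating inconsistent pairs; the remainder is a Birkhoff-style coordinatization in which Dilworth's theorem supplies precisely the $n$ coordinates needed for an $n$-dimensional interval.
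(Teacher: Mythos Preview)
Your proposal is correct and follows essentially the same route as the paper: reduce to $X_Q$ via Lemma~\ref{lem:vertices.are.order-ideals}, decompose $Q$ into $n$ chains by Dilworth's theorem, and send an order ideal $I$ to $(|I\cap C_1|,\ldots,|I\cap C_n|)$. The paper attributes this coordinate map to Reading and phrases the dimension count via the order dimension of $J(Q)$, whereas you work it out directly and make explicit that antichain elements occupy distinct chains; these are presentational differences only.
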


\begin{proof}
Let the interval be $X[v,w]$. Point the cube complex $X$ at $v$ and let $X=X_P$ be the cube complex of the poset with inconsistent pairs $P$.  Let $Q$ be the consistent order ideal corresponding to the vertex $w$. 

Partially order $[v,w]$ by letting $w_1 \leq w_2$ if some edge geodesic from $v$ to $w_2$ goes through $w_1$. Combinatorially, this is equivalent to saying that the corresponding order ideals $W_1$ and $W_2$ of $Q$ satisfy $W_1 \subseteq W_2$. As a poset, $[v,w]$ is isomorphic to $J(Q)$, the poset of order ideals of $Q$. By Birkhoff's theorem, $J(Q)$ is a distributive lattice.

The \emph{order dimension} of a poset is the least positive integer $n$ for which the poset can be embedded as a subposet of $\zz^n$ with the componentwise partial order. Dilworth \cite{Dilworth} proved that the order dimension of a distributive lattice $J(Q)$ equals $q:=\textsf{width}(Q)$, the size of the largest antichain of $Q$.

The following embedding is due to Reading \cite{Reading}: By Dilworth's theorem, $Q$ can be decomposed as the disjoint union of $q$ chains $C_1, \ldots, C_q$. To embed $J(Q)$ into $\zz^q$, map an order ideal $R \subseteq Q$ to the point $f(R) = (|R \cap C_1|, \ldots, |R \cap C_q|)$. Notice that if $R_2$ covers $R_1$ in $J(Q)$ then the edge between $f(R_1)$ and $f(R_2)$ is a unit edge in the direction $i$ such that $C_i$ contains the element $R_2-R_1$.

This means that the 1-skeleton of $X$ embeds as a subcomplex of the 1-skeleton of the integer lattice cubing of $\rr^n$. 
One can then embed the whole cube complex by filling in the appropriate cubes: whenever we need to fill in a cube in $X$, the corresponding edges in $\rr^n$ form a unit cube which is part of the integer lattice cubing.
\end{proof}

\subsection{Every finite CAT(0) cube complexes comes from a reconfigurable system.}\label{sec:realizable}

An important source of examples of CAT(0) cube complexes is the theory of reconfigurable systems. In this theory, one starts with a graph $G$ and a set $A$ of possible vertex labels. A \emph{state} is a labeling of each vertex of $G$ with an element of $A$. There are certain \emph{moves} or \emph{generators} that one can perform. Each move $\phi$ has a prescribed support $SUP(\phi) \subseteq V(G)$ which determines the legality of a move, and a trace $TR(\phi) \subseteq SUP(\phi)$ where the move takes effect. The move also has a labeling $u: SUP(\phi) - TR(\phi) \to A$ and two labelings $u_1, u_2: TR(\phi) \to A$. The move $\phi$ is \emph{admissible} at a state $v$ if $v$  agrees with $u$ on $SUP(\phi) - TR(\phi)$ and it agrees with one of $u_1, u_2$ on $TR(\phi)$. In that case, the effect of the move is to switch the local labeling of $TR(\phi)$ from $u_1$ to $u_2$ or vice versa. (In particular, if $\phi$ is admissible at $v$, then it is also admissible at $\phi(v)$, and $\phi(\phi(v)) = v$.) A \emph{reconfigurable system} is a collection of moves and a collection of states which is closed under those moves.

Innumerable systems which change according to local rules can be naturally modeled as reconfigurable systems. Examples include the motion planning of a robot, the prevention of collision among several robots, the motion of a set of particles on a graph, and the folding of proteins, among many others; see \cite{GhristPeterson07}.

In many reconfigurable systems, the \emph{parameter space} of all possible positions of the system naturally takes the shape of a CAT(0) cubical complex $X$. 
Say that two moves $\phi_1$ and $\phi_2$ are said to \emph{commute} if they are ``physically independent" and can be applied simultaneously; that is, if $TR(\phi_0) \cap SUP(\phi_1) = TR(\phi_1) \cap SUP(\phi_0) = \emptyset$.
Then the vertices of $X$ correspond to the states, the edges correspond to the moves connecting two states, and the cubes correspond to sets of pairwise commuting moves, which can be applied simultaneously to a state. Ghrist and Peterson \cite{GhristPeterson07} showed that the result is a cubical complex of \emph{local} non-positive curvature. This means that the link of every vertex is a flag simplicial complex, but the cubical complex is not necessarily simply connected.  

They also gave an indirect proof of a stronger converse:  that every CAT(0) cubical complex is the state complex of a reconfigurable system. With Theorem \ref{th:poset} in hand, we can now give a simple constructive proof:

\begin{thm}[Ghrist-Peterson] Any finite CAT(0) cubical complex is realizable as the state complex of a reconfigurable system.
\end{thm}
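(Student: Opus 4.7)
My plan is to use Theorem \ref{th:poset} to pass from geometry to combinatorics: choose a root vertex $v$ of the given finite CAT(0) cubical complex $X$, write $X = X_P$ for the associated finite poset with inconsistent pairs $P = P(X,v)$, and then build a reconfigurable system directly from $P$ whose state complex is $X_P$.

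I would take the underlying graph $G$ to have vertex set $P$ (its edges play no role in the state complex and may be left empty), alphabet $A = \{0,1\}$, and declare the allowed states to be the indicator functions of the consistent order ideals of $P$, so that states correspond bijectively to vertices of $X_P$. For each $p\in P$ I introduce a single move $\phi_p$ that toggles the indicator at $p$: set $TR(\phi_p) = \{p\}$, and let $SUP(\phi_p)$ consist of $p$ together with every element that covers $p$, every element that is covered by $p$, and every element that forms a minimal inconsistent pair with $p$. All these sets are finite because $P$ is. The two trace labelings are $u_1\equiv 0$ and $u_2\equiv 1$, while the fixed labeling on $SUP(\phi_p)\setminus TR(\phi_p)$ assigns $1$ to the elements covered by $p$ and $0$ to both the covers of $p$ and the minimal inconsistent partners of $p$.

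The verification has three parts. First, admissibility of $\phi_p$ at a state $I$ must be equivalent to $I\triangle\{p\}$ being a consistent order ideal; this uses downward closure of order ideals to promote the requirement ``covers of $p$ lie in $I$'' to ``every $q<p$ lies in $I$'', and uses local finiteness together with the upward-closure axiom for inconsistent pairs to promote ``no minimal inconsistent partner of $p$ lies in $I$'' to ``no element inconsistent with $p$ lies in $I$''. Second, each admissible move connects a consistent order ideal to one differing by a single element, so edges of the state complex match edges of $X_P$. Third, a cube of the state complex is specified by a state $I$ and a set $\{\phi_{p_1},\dots,\phi_{p_k}\}$ of pairwise commuting admissible moves; commutation forces no $p_i$ to lie in another's support, so the $p_i$ form an antichain with no minimal inconsistent pair among them, and simultaneous admissibility at $I$ upgrades this to an antichain of pairwise consistent elements. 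Splitting these into the moves that add ($M_+$) and the moves that remove ($M_-$), one checks that $I^* := I\cup M_+$ is a consistent order ideal in which $M := M_+\cup M_-$ is a set of maximal elements, and the cube in question coincides with $C(I^*, M)$ from Section \ref{sec:combo}.

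The step I expect to be most delicate is calibrating $SUP(\phi_p)$: too small, and admissibility no longer guarantees that $I\triangle\{p\}$ remains a consistent order ideal, so the state set is not closed; too large, and moves whose corresponding elements sit at a common corner of a cube in $X_P$ fail to commute, collapsing the cube structure of the state complex. The prescription of covers, co-covers, and minimal inconsistent partners is the tightest one that succeeds in both directions, and finiteness of $X$ makes every ingredient of the construction finite and unambiguous.
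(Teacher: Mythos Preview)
Your construction is essentially the paper's own: root at $v$, write $X=X_P$, take $0$--$1$ labelings of $P$ with one toggle move $\phi_p$ per element, and declare the states to be the consistent order ideals. Your inclusion of the \emph{covers} of $p$ in $SUP(\phi_p)$ (labeled $0$) is a genuine refinement---the paper's support lists only the elements covered by $p$ and the minimal inconsistent partners, but your extra condition is precisely what is needed so that removing $p$ is blocked when something above $p$ already lies in $I$, i.e., so that the state set is actually closed under the moves. One small phrasing slip: commutation alone only rules out \emph{covering} relations and minimal inconsistent pairs, not all comparability; it is the simultaneous admissibility at the order ideal $I$ (as you correctly invoke in the next clause) that forces the $p_i$ to form a genuine antichain of pairwise consistent elements.
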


\begin{proof}
Root the given CAT(0) cube complex $X$ at a vertex $v$ and let it correspond to a (finite) poset with inconsistent pairs $P$. We construct a reconfigurable system which represents a virus trying to take over $P$, starting at the bottom and working its way up the poset. The comparability relations $i<j$ help transmit the virus up the poset, while the inconsistency relations prevent it from spreading.

The underlying graph of the system is the Hasse diagram of $P$ and the set of labels is $\{0,1\}$. 
For each element $p \in P$ there is a move $\phi_p$ which infects it, changing
its label from a $0$ to a $1$ or vice versa. 
To apply the move $\phi_p$, it is required that 
the elements covered by $p$ are labeled 1, and that the minimal elements among those inconsistent with $p$ are labeled 0.

 The collection of states is the collection of consistent order ideals of $P$, encoded as 0-1 labelings of $P$, where the $1$ labels denote the elements of the ideal. This is clearly closed under the set of moves. It is also clear that the state complex of this reconfigurable system is isomorphic to $X$.
\end{proof}

From this point of view, 
the space $X$ serves as a continuous model for a discrete reconfigurable system, with points of $X$ representing positions of the continuous model. 
Finding geodesics in $X$ is equivalent to finding the optimal way to get the system from one position to another one under this particular metric. 
Abrams and Ghrist \cite{AbramsGhrist04} consider the analogous problem under a different metric, which assumes that all moves take the same amount of time, and physically independent moves can be done simultaneously at no additional cost. Our metric even allows independent moves to be done simultaneously \emph{at different speeds}, but now there is a `Pythagorean' penalty for doing so. For instance, if each one of  two independent moves demand 1 unit of time, energy, or cost, then we can do them both simultaneously in $\sqrt{2}$ units. As should be expected, our geodesic will be different from theirs in general.

\section{Combinatorics of Geodesics}
\label{sec:combin.geodesics}

In this section, we study the combinatorics of finding the geodesic from $x$ to $y$ in a CAT(0) cube complex.  
The first step will be to root the complex at a vertex $v$ which is intuitively ``near $x$" and ``in the opposite direction from $y$", as follows. Let $V$ and $W$ be the minimal cubes containing $x$ and $y$ respectively. We choose vertices $v$ and $w$ of $V$ and $W$, which we describe by their position with respect to the hyperplanes. If a hyperplane $H$ does not intersect $V$, the position of $v$ with respect to $H$ is determined; similarly for $w$ and $W$. If $H$ intersects $V$ and does not intersect $W$, choose $v$ to be on the opposite side of $H$ from $W$, and vice versa. Finally, if $H$ intersects both $V$ and $W$, choose $v$ and $w$ so that these four points are in relative position $v,x,y,w$ with respect to the direction orthogonal to $H$. In other words, choose them so that, when we root the complex at $v$, we have $0 = v_H \leq x_H \leq y_H \leq w_H = 1$ in the standard embedding of $X$. (If $x_H = y_H$ then we have some freedom in our choice of $v$.)

Now our rooted complex has an associated poset with inconsistent pairs $P$.  
Let $Q$ be the consistent order ideal corresponding to $w$.  Then $Q$ contains no inconsistent pairs and $X[v,w] = X_Q$.  The only non-zero coordinates in $X_Q$ are those corresponding to elements of $Q$. The following lemma (which also appears in  \cite{ChepoiMaftuleac10} 
and is used there in the 2-dimensional case) allows us to use the tools from the previous sections in the geodesic problem.

\begin{lemma} \label{lem:geoInterval}
The geodesic from $x$ to $y$ is contained in $X[v,w]$. 
\end{lemma}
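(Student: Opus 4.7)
The plan is to trap the geodesic from $x$ to $y$ inside every halfspace that contains both $v$ and $w$; since $X[v,w]$ is exactly the intersection of these halfspaces, this will suffice.

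First I would invoke two standard facts. (i) In a CAT(0) cube complex, every closed halfspace is a convex subspace in the $L^2$ metric; this is classical, following from the fact that each hyperplane is an isometrically embedded CAT(0) cube complex one dimension lower, so the two halfspaces it bounds are totally geodesic. (ii) Under Theorem \ref{th:poset}, if $w$ corresponds to the consistent order ideal $Q$ of $P$, then the hyperplanes separating $v$ from $w$ are exactly those indexed by $Q$. Consequently, the halfspaces that contain both $v$ and $w$ are the $v$-sides of the hyperplanes indexed by $P\setminus Q$, and their common intersection is the subcomplex where all those coordinates are pinned at $0$, which is precisely $X_Q \cong X[v,w]$ by Lemma \ref{lem:vertices.are.order-ideals}.

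The main step will be to verify the following claim: for every halfspace $h$ with $v, w \in h$, we have $x, y \in h$. The careful choice of $v$ and $w$ in the setup was designed precisely for this. Let $H$ be the hyperplane bounding $h$; I would split into four cases according to whether $H$ meets $V$ and $W$. If $H$ meets neither, then $x$ lies on the $v$-side of $H$ and $y$ on the $w$-side, and these coincide by hypothesis. If $H$ meets exactly one of $V, W$, the construction places $v$ and $w$ on opposite sides of $H$, so this case cannot occur at all. If $H$ meets both, the prescription $v_H \leq x_H \leq y_H \leq w_H$ in the direction orthogonal to $H$ forces $x$ and $y$ onto whichever side contains $v$ and $w$, by monotonicity.

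With these pieces in hand, the conclusion is immediate: convexity of each $h$ keeps the unique geodesic from $x$ to $y$ inside $h$, and intersecting over all halfspaces containing $\{v, w\}$ confines the geodesic to $X[v,w]$. The main obstacle is really ingredient (i), the convexity of halfspaces, which does the genuine geometric work; the four-case verification of the claim is routine bookkeeping driven by the construction of $v$ and $w$, and the identification $X[v,w] = \bigcap_{h \ni v,w} h$ is a direct reading of the combinatorial dictionary of Section \ref{sec:combo}.
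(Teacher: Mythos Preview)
Your argument is correct and takes a genuinely different route from the paper's. The paper proceeds by contradiction with an explicit retraction: assuming the geodesic exits $X_Q$, it projects the offending arc back onto $X_Q$ by zeroing every coordinate outside $Q$, obtains a path that is no longer, and invokes uniqueness of geodesics. You instead write $X_Q$ as an intersection of halfspaces containing $v$ and $w$, verify from the construction of $v,w$ that $x$ and $y$ lie in each of them, and appeal to halfspace convexity. The paper's approach is fully self-contained (no imported CAT(0) facts), while yours is more conceptual and makes the underlying reason transparent, at the cost of citing convexity as a black box. Note, incidentally, that the paper's retraction is precisely a proof that $X_Q$ is convex, so the two arguments share the same geometric core.

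Two small points of care. First, the intersection of the \emph{closed metric} halfspaces $\{p_i \le 1/2\}$ over $i\notin Q$ is strictly larger than $X_Q$; what equals $X_Q$ is the intersection of the \emph{combinatorial} halfspace subcomplexes $\{p_i = 0\}$. These are also convex (indeed, it is standard that the convex subcomplexes of a CAT(0) cube complex are exactly the intersections of combinatorial halfspaces), and your case analysis in fact places $x$ and $y$ in these smaller sets: whenever $v$ and $w$ lie on the same side of $H$, the hyperplane $H$ misses both $V$ and $W$, so the $H$-coordinate of $x$ and of $y$ is $0$. Second, your fourth case ($H$ meets both $V$ and $W$) is vacuous rather than needing monotonicity: the construction forces $v_H=0$ and $w_H=1$, so no halfspace of $H$ contains both $v$ and $w$.
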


\begin{proof}
Suppose not.  Then at some point $q$ the geodesic exits $X_Q$, before re-entering at point $r$, since $y$ is in $X_Q$.  Let $\gamma_{qr}$ be the segment of the geodesic between $q$ and $r$.  Project $\gamma_{qr}$ onto $X_Q$ by sending each point $z = (z_1, ..., z_s)$ to $\tilde{z} = (\tilde{z}_1, ... \tilde{z}_s)$ where 

\[
\tilde{z}_i=\left\{\begin{array}{ll}
z_i,
&i \in Q\\
0,
&i \notin Q \\
\end{array}\right. ,
\]
which is easily checked to be in $X_Q$. Since any two points in $\gamma_{qr}$ which lie in the same cube $C(I,M)$ in $X_P$ get mapped to points that lie the same cube $C(\tilde{I}, \tilde{M})$ in $X_Q$, the distance of the projected points is less than or equal to their distance in $\gamma_{qr}$.
This implies that the projection $\tilde{\gamma}_{qr}$ of $\gamma_{qr}$ onto $X_Q$ is a path of length less than or equal to the length of $\gamma_{qr}$.  Since the geodesic in a CAT(0) complex is unique, $\gamma_{qr}$ could not be the geodesic.  
\end{proof}

Thus the geodesic must lie in some connected sequence of cubes in $X_Q$ that begins with the cube $V$ containing $x$ and ends with the cube $W$ containing $y$.  We will now introduce a certain kind of cube sequence, called a \emph{valid cube sequence}.  We will then show that valid cube sequences have some nice properties, and that the geodesic is contained in a valid cube sequence.

\begin{defn}
Let $V$ and $W$ be the minimal cubes in $X_Q$ containing $x$ and $y$, respectively.  A \emph{valid cube sequence} is a sequence  $V = C(I_1, M_1), C(I_2, M_2), \ldots , C(I_k, M_k) = W$ of cubes in $X_Q$ such that
\begin{enumerate}
\item $I_1\subset I_2 \subset \cdots \subset I_k = Q$;
\item  $I_1 = M_1$, $ I_i \bs I_{i-1} \subseteq M_i$ for $1< i \leq k$;
\item $M_i$ is a maximal anti-chain in $Q$ for all $1 \leq i \leq k$.
\end{enumerate}
\end{defn}

By condition (3), the cube sequence is determined uniquely by the order ideals $I_1, \ldots, I_k$. We can therefore think of it as a \emph{partial linear extension}, \emph{i.e.}, an order-preserving function $f:P \to [k]$, where $f(p)$ is the smallest $i$ such that $p \in I_i$; this is illustrated in Figure \ref{fig:partialExtValidCubeSeq}.

\begin{ex}\label{ex:valid.cube.seq}
Suppose we want to find the geodesic between points $x$ and $y$, and have identified the interval $X[v,w] \cong X_Q$ shown in Figure~\ref{fig:validCubeSeq}.  The corresponding poset $Q$ is given in Figure~\ref{fig:posetValidCubeSeq}.  The shaded cubes form a valid cube sequence  $C(I_1, M_1)$, $C(I_2, M_2)$, $C(I_3, M_3)$, $C(I_4, M_4)$, $C(I_5, M_5)$, where $I_1 = \{1,5\}$, $I_2 = \{1,2,5\}$, $I_3 = \{1, 2, 5, 6\}$, $I_4 = \{1, 2, 3, 5,6, 7\}$, $I_5 = \{1, 2, 3, 4, 5, 6, 7\}$, and $M_1 = \{1,5\}, M_2 = \{2, 5\}, M_3 = \{2,6\}, M_4 = \{3, 7\}, M_5 = \{4,7\}$.  The shortest path from $x$ to $y$ contained in this valid cube sequence is shown by the dashed line.  Figure~\ref{fig:partialExtValidCubeSeq} gives the partial linear extension corresponding to this valid cube sequence.
\end{ex}

\begin{figure}[ht]
\centering
\subfigure[]{
\includegraphics[scale = 0.5]{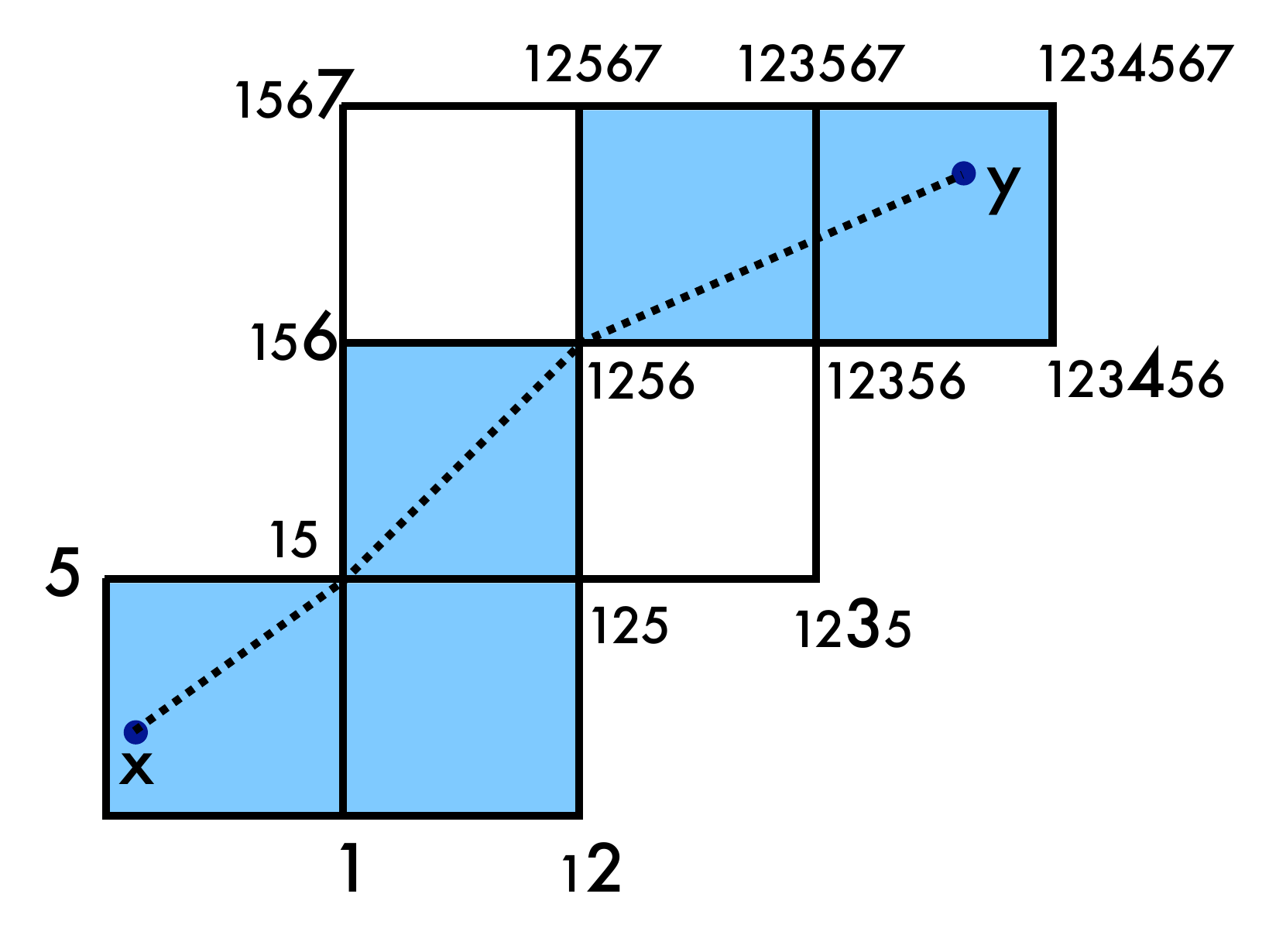}
\label{fig:validCubeSeq}
}
\subfigure[]{
\includegraphics[scale = 0.5]{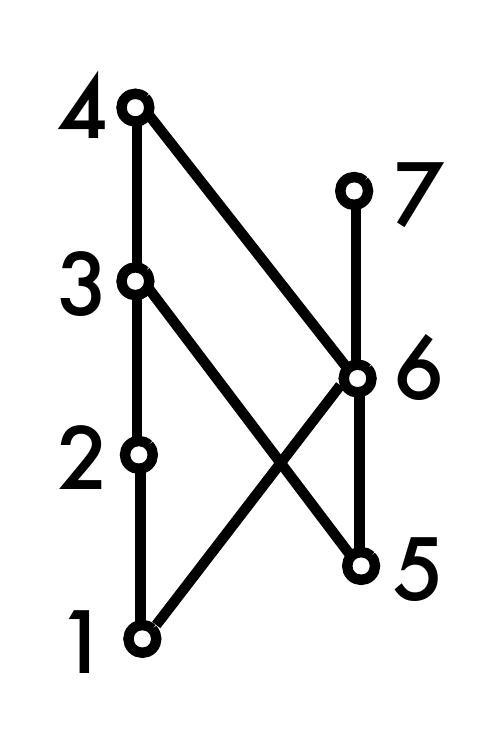}
\label{fig:posetValidCubeSeq}
}
\subfigure[]{
\includegraphics[scale = 0.5]{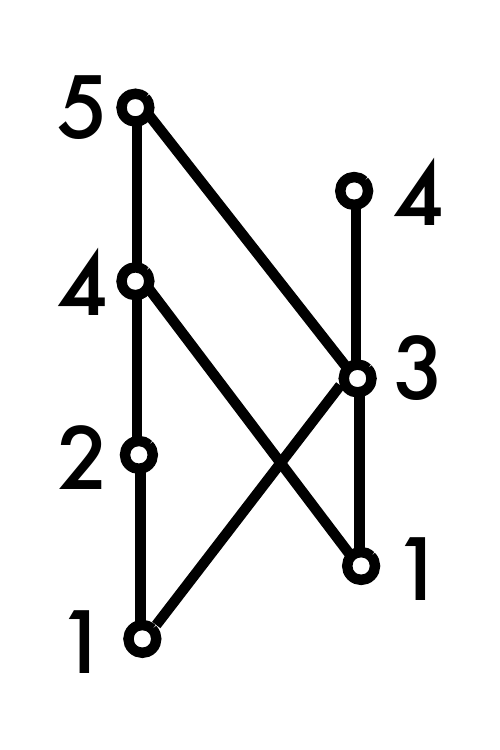}
\label{fig:partialExtValidCubeSeq}
}

\caption{Corresponding to Example~\ref{ex:valid.cube.seq}, Figure \ref{fig:validCubeSeq}
 is the interval $X[v,w]$ that contains the geodesic from $x$ to $y$.  The shaded cubes form a valid cube sequence, and the dashed line represents the shortest path from $x$ to $y$ contained in that valid cube sequence.  Figure \ref{fig:posetValidCubeSeq} is the poset of the interval $X[v,w]$, and Figure \ref{fig:partialExtValidCubeSeq} shows the partial linear extension corresponding to the shaded valid cube sequence. }
\end{figure}

\begin{ex}
An important example of a cube sequence (which is usually not valid) is the \emph{normal cube path} between the vertices $v$ and $w$, defined by Niblo and Reeve \cite{NibloReeves98} as follows.   Let $C_1$ be the minimal cube containing $x$.  Starting at $v$, travel through the cube $C_1$ to the vertex which is closest to $w$ in the edge geodesic metric.  Define $v_1$ to be this vertex, and iterate the process starting at $v_1$. In the end we get a  cube sequence $C_1, \ldots, C_k$ and a sequence of vertices $v=v_0, v_1, \ldots, v_{k-1}, v_k=w$ where $C_i \cap C_{i+1} = v_i$ and $C_i=X[v_{i-1},v_i]$. Incidentally, to construct an edge geodesic from $v$ to $w$, we just need to string together minimal edge paths from $v_{i-1}$ to $v_i$ along $C_i$ for each $i$.

This construction is easily described in terms of the corresponding poset with (no) inconsistent pairs. As above, root $X[v,w]$ at $v$ and let $Q$ be the consistent order ideal corresponding to vertex $w$, so $X[v,w]=X_Q$. Then $C_1, \ldots, C_k$ are obtained by iteratively pruning off all minimal elements of the poset $Q$.

A normal cube path is not necessarily a valid cube sequence, since some of its cubes may not be maximal.  For example, if $v$ and $w$ are opposite corners of the $n \times k$ grid with $n \neq k$, then the normal cube sequence will proceed on a diagonal from $v$ towards $w$ until it hits an edge of the grid.   
From this point on, the cubes will be edges, and hence not maximal.

However, we can easily modify a normal cube path to make it a valid cube sequence, by replacing non-maximal cubes with maximal ones containing them.  If $C_i = C(I_i, M_i)$ is the first non-maximal cube, we can replace it with $C'_i = C(I_i, M'_i)$ where $M'_i$ is the set of maximal elements of $I_i$, and iterate.  The result is clearly a valid cube sequence, which we call the \emph{extended normal cube path}.

\end{ex}

\begin{prop}\label{prop:validproperties}
A valid cube sequence $C(I_1, M_1), C(I_2, M_2), ..., C(I_k, M_k)$ has the following properties:

\begin{enumerate}
\item It contains $x$ and $y$.

\item  It is connected.

\item  The intersection $C(I_{i-1}, M_{i-1}) \cap C(I_i, M_i)$ is the face $C(I_{i-1}, M_{i-1} \cap M_i)$.

\item Each cube $C(I_i, M_i)$ is maximal.

\end{enumerate}
\end{prop}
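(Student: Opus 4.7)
The plan is to establish the four properties in the order (1), (4), (3), (2), since the proof of (2) is a quick consequence of (3) and the other three parts are logically independent. Property (1) is immediate from the setup: by construction $V = C(I_1,M_1)$ is the minimal cube containing $x$, and $W = C(I_k,M_k)$ is the minimal cube containing $y$.

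For (4), the strategy is to prove the stronger statement that $M_i = (I_i)_{\max}$ for every $i$, and then invoke the Lemma characterizing maximal cubes of $X_Q$ as those of the form $C(Q_{\leq A}, A)$ with $A$ a maximal antichain. The inclusion $M_i \subseteq (I_i)_{\max}$ is built into the definition of a cube. For the reverse inclusion, I would argue by contradiction: if $m' \in (I_i)_{\max} \setminus M_i$, then since $M_i$ is a maximal antichain of $Q$ (condition (3) of the definition of valid cube sequence), $m'$ must be comparable to some $m \in M_i$; but either $m<m'$ contradicts the maximality of $m$ in $I_i$ (since $m' \in I_i$), or $m>m'$ contradicts the maximality of $m'$. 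Once $M_i = (I_i)_{\max}$ is established, $I_i = Q_{\leq M_i}$ follows because every element of an order ideal sits below some maximal element, and the Lemma finishes the argument.

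For (3), I would compute the intersection directly in the standard embedding. A point of $C(I_{i-1},M_{i-1})$ is characterized by $x_j = 1$ for $j \in I_{i-1}\setminus M_{i-1}$, $x_j \in [0,1]$ for $j \in M_{i-1}$, and $x_j = 0$ for $j \notin I_{i-1}$; the description of $C(I_i,M_i)$ is analogous. The critical case is $j \in I_i \setminus I_{i-1}$: the first cube demands $x_j = 0$, while the second would demand $x_j = 1$ whenever $j \in I_i \setminus M_i$. Condition (2) of the definition of valid cube sequence, $I_i \setminus I_{i-1} \subseteq M_i$, is exactly what is needed to prevent contradiction, forcing $x_j = 0$ throughout $I_i\setminus I_{i-1}$. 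Matching up the remaining constraints identifies the intersection as the face $C(I_{i-1}, M_{i-1}\cap M_i)$ of both cubes (note $M_{i-1}\cap M_i \subseteq M_{i-1} \subseteq (I_{i-1})_{\max}$, so this is a legitimate cube). Property (2) is then immediate: consecutive cubes share the nonempty common face identified in (3), so the sequence is connected as a subcomplex.

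The work is essentially bookkeeping. The only mildly delicate point is (3), where one must track precisely which coordinates each cube constrains and observe that condition (2) of the definition of valid cube sequence is exactly the hypothesis needed to make the two descriptions match on $I_i \setminus I_{i-1}$. Everything else follows from the definition of a cube and the Lemma on maximal cubes.
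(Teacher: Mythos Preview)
Your proof is correct and structurally parallel to the paper's: properties (1) and (4) are handled the same way (the paper is terser on (4), simply citing that $M_i$ is a maximal antichain, while you spell out $M_i = (I_i)_{\max}$), and both derive (2) from (3).

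The one genuine difference is in (3). The paper argues at the vertex level: it shows that a vertex $u$ with order ideal $I_{i-1}\setminus M$, $M\subseteq M_{i-1}\cap M_i$, lies in both cubes by rewriting $I_{i-1}\setminus M = I_i \setminus ((I_i\setminus I_{i-1})\cup M)$ and using $I_i\setminus I_{i-1}\subseteq M_i$; conversely, it shows any common vertex must have $M\subseteq M_i$. You instead work pointwise in the standard embedding, reading off the constraints $x_j\in\{0\},[0,1],\{1\}$ imposed by each cube and observing that condition (2) of the definition is exactly what rules out a conflict on coordinates in $I_i\setminus I_{i-1}$. Your coordinate argument is a bit more direct and makes the role of the hypothesis $I_i\setminus I_{i-1}\subseteq M_i$ more transparent; the paper's vertex argument stays purely combinatorial and avoids invoking the embedding. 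Either way the content is the same bookkeeping.
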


\begin{proof}
Property 1 holds since the first and last cubes are the minimal ones in $Q$ containing $x$ and $y$, respectively.

Property 2 follows from property 3, so we now show that $C(I_{i-1}, M_{i-1}) \cap C(I_i, M_i) = C(I_{i-1}, M_{i-1} \cap M_i)$.  Note that $C(I_{i-1}, M_{i-1} \cap M_i)$ is not empty by definition.  Let $u$ be a vertex in $C(I_{i-1},M_{i-1} \cap M_i)$, with corresponding order ideal $I_{i-1} \bs M$, where $M$ is contained in $M_{i-1} \cap M_i$, and thus in $M_{i-1}$.  So $u$ is a vertex of $C(I_{i-1}, M_{i-1})$.  We can rewrite $I_{i-1} \bs M = I_i \bs ((I_i \bs I_{i-1}) \cup M)$.  Since both $I_i \bs I_{i-1}$ and $M$ are contained in $M_i$, we also have that $u$ is a vertex of $C(I_i, M_i)$.  Thus $C(I_{i-1}, M_{i-1} \cap M_i) \subseteq C(I_{i-1}, M_{i-1}) \cap C(I_i, M_i)$.  

Next let $u$ be a vertex in $C(I_{i-1}, M_{i-1}) \cap C(I_i, M_i)$.  Since $v \in C(I_{i-1}, M_{i-1})$, the order ideal corresponding to $u$ is $I_{i-1} \bs M$, for some $M \subseteq M_{i-1}$.  If $M \nsubseteq M_i$, then there would exist some element $a \in M \subseteq I_{i-1} \subseteq I_i$ that is not in $M_i$.  This would imply that $a$ is in all of the order ideals corresponding to vertices in $C(I_i, M_i)$.  But $a$ is not in the order ideal $I_{i-1} \bs M$ of $u$, which is a vertex of $C(I_i, M_i)$, a contradiction.  Thus $M \subseteq M_i$, and hence $u$ is in $C(I_{i-1}, M_{i-1} \cap M_i)$.   

Finally, property 4 follows from the condition that $M_i$ is a maximal anti-chain in $Q$.
\end{proof}

\begin{lemma}\label{lem:geoInValidCubeSeq}
The geodesic is contained in a valid cube sequence.
\end{lemma}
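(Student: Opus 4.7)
The plan is to apply Lemma~\ref{lem:geoInterval} to confine the geodesic $\gamma$ to $X_Q$, and then extract a valid cube sequence from the behavior of $\gamma$ in the standard embedding. Let $\gamma: [0, L] \to X_Q$ be the geodesic from $x$ to $y$.

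The key technical step will be a monotonicity claim: for each element $p \in Q$, the coordinate function $t \mapsto \gamma(t)_p$ is non-decreasing along $\gamma$. This should follow from the standard fact that CAT(0) geodesics cross each combinatorial hyperplane at most once, combined with the choice of $v$ ensuring $x_p \le y_p$ in the standard embedding, and the convexity in $X_Q$ of the coordinate sublevel sets $\{z : z_p \le c\}$ for $c \in [0, 1]$. Together these force each $\gamma^{-1}(\{z : z_p \le c\})$ to be a subinterval of $[0,L]$ containing $0$, which rules out any temporary decrease in $\gamma(t)_p$.

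Given monotonicity, I will define for each $p \in Q$ the activation time $\alpha_p := \inf\{t : \gamma(t)_p > 0\}$. If $p < q$ in $Q$, then $\gamma(t)_q > 0$ forces $\gamma(t)_p = 1 > 0$, so $\alpha_p \le \alpha_q$; sorting $Q$ by $\alpha_p$ therefore produces a linear extension of $Q$. The distinct activation times partition $[0, L]$ into subintervals during which $\gamma$ traverses cubes with a fixed order ideal, and these order ideals form a chain $J_1 \subsetneq \cdots \subsetneq J_m$ with $J_1$ the support of $x$ and $J_m$ the support of $y$. Extending on the right by adding the elements of $Q \setminus J_m$ one at a time (in any order compatible with $Q$), and on the left similarly if needed, produces a chain $I_1 \subsetneq \cdots \subsetneq I_k = Q$ satisfying condition~(1). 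For each $i$, I choose $M_i$ to be any maximal antichain of $Q$ containing $I_i \setminus I_{i-1}$ together with the free coordinates $N(t)$ of the open cube hosting $\gamma(t)$ during the $i$th subinterval; this union is an antichain of $Q$ by monotonicity, and any antichain extends to a maximal one. Conditions~(2) and~(3) then hold by construction, and the containment $\gamma(t) \in C(I_{i(t)}, M_{i(t)})$ for a suitable index $i(t)$ follows immediately.

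The main obstacle will be the monotonicity claim. Although geometrically intuitive, it requires some care because the coordinate sublevel sets $\{z : z_p \le c\}$ are subcomplexes of $X_Q$ only for $c \in \{0, 1\}$ and correspond to a combinatorial hyperplane only at $c = 1/2$; for intermediate $c$ their convexity must be deduced from the flat Euclidean structure within each cube together with the CAT(0) convexity of halfspaces, combined with the defining constraint $x_p \le y_p$ from the rooting choice.
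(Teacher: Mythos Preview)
Your approach is correct in spirit but takes a genuinely different and more elaborate route than the paper's.

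The paper does \emph{not} establish full coordinate monotonicity. Instead it proves only the weaker statement that the order ideal $\{p : \gamma(t)_p > 0\}$ never shrinks, and it does so by a direct projection argument: if the geodesic left the set $\{z_i = 0\}$ at some point $r$ and returned to it at $r'$, one projects the arc $\gamma|_{[r,r']}$ back onto $\{z_i = 0\}$ by sending $z_j \mapsto 0$ for every $j \geq_Q i$ and leaving the other coordinates alone. This projection is distance--nonincreasing cube by cube, so it produces a path no longer than $\gamma$, contradicting uniqueness of the geodesic. That single contraction argument replaces your entire monotonicity discussion.

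Your stronger claim that each $t \mapsto \gamma(t)_p$ is non-decreasing is true, but your justification via convexity of the sublevel sets $\{z_p \leq c\}$ for arbitrary $c$ is the weak point you yourself flag, and it is not quite finished. The clean way to get it is to use that the hyperplane carrier $N(H_p)$ is convex and splits isometrically as $H_p \times [0,1]$; then each level set $\{z_p = c\}$ for $c \in (0,1)$ is a parallel copy of $H_p$ inside this product, hence convex, which forces $\gamma^{-1}\{z_p = c\}$ to be a subinterval and gives monotonicity. Your phrase ``flat Euclidean structure within each cube together with the CAT(0) convexity of halfspaces'' gestures at this but does not name the carrier decomposition, which is really what is doing the work.

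There is also a small wrinkle in your construction of the $M_i$: extending $N(t) \cup (I_i \setminus I_{i-1})$ to a maximal antichain of $Q$ may introduce elements lying outside $I_i$, in which case $C(I_i, M_i)$ is not a cube as written; you would need to enlarge $I_i$ correspondingly and check that the chain condition survives. This is repairable but not automatic. The paper sidesteps this by simply asserting that a connected sequence of maximal cubes with increasing order ideals is a valid cube sequence, which is itself a bit brisk.

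In summary: the paper's projection trick is shorter and proves exactly what is needed; your route proves more (full coordinate monotonicity) at the cost of needing the carrier product structure and a more careful antichain-extension step.
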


\begin{proof}
The set of valid cube sequences contains all connected sequences of maximal cubes containing $x$ and $y$ such that we always have larger order ideals at each step.  Clearly the geodesic must pass through a connected set of maximal cubes containing $x$ and $y$.  We must show that the order ideals corresponding to the cubes of the sequence always increase as we move along the geodesic from $x$ to $y$.  To this end it suffices to show that we never gain an element $i$ in the order ideal and then lose it again.  If this happened, the geodesic $\gamma$ would leave the plane $x_i = 0$ at some point $r$
and then return to it at some point $r'$ (in the standard embedding).  Project the path between $r$ and $r'$ onto the plane $x_i = 0$, where the projection of the point $z$ defined by: 
\[
\tilde{z}_j=\left\{\begin{array}{ll}
0,
&j \geq_Q i  \\
z_j,
& \textrm{otherwise}\\
\end{array}\right. ,\]
which is easily seen to be in $X_Q$.
This gives a path that eliminates this extra addition and subtraction of the element $i$, and is no longer than $\gamma$, contradicting that $\gamma$ is the unique geodesic.   
\end{proof}

\section{A Characterization of Geodesics}\label{sec:char}

We now characterize geodesics in CAT(0) cube complexes.  To do this, we introduce two new properties, the Zero-Tension Condition and the No Shortcut Condition, which we will show all geodesics satisfy.  We then show that if we have a path contained in a valid cube sequence that satisfies the Zero-Tension and No Shortcut conditions, then it must be the geodesic.  Algorithm \ref{alg:geodesic} for computing geodesics will follow from this characterization.

\subsection{The Zero-Tension Condition}\label{sec:zero}

In this subsection we describe the Zero-Tension condition, which allows us to check whether or not a path through a given sequence of cubes is a geodesic. 
We describe the condition for general polyhedral complexes.

Let $\calc$ be a polyhedral complex with the induced Euclidean metric on each polyhedral cell.  We imagine that each maximal cell is embedded in some specific Euclidean space.  The attaching maps that connect polyhedral cells are assumed to be injective and isometric.  We assume that there are only finitely many cells and that all polyhedra are convex.

Let $x,y$ be two points in $\calc$ that are in different cells.  Since each of the cells has a Euclidean metric,  any connected component of any geodesic between $x$ and $y$ that is contained in one cell must be a straight line connecting two points on the boundary the cell (or  connecting $x$, $y$  to the boundary of the cell).  Hence, any candidate geodesic from $x$ to $y$ consists of a sequence of line segments connecting boundary points of cells to each other and to $x$ and $y$.  Thus, we may suppose that we have the following setup:

Let $P_1,F_1, P_2, F_2, \ldots, F_{k-1}, P_k$ be a sequence of cells of $\calc$ such that
$F_i \subseteq P_i \cap P_{i+1}$ for $i = 1, \ldots, k-1$, $x \in P_1$, $y \in P_k$.  Let $F^\circ$ denote the relative interior of face $F$.  Consider a sequence of points $p_0, \ldots, p_k$ where $p_0 = x$, $p_k =y$ and $p_i \in F_i^\circ$.  The sequence $p_0, \ldots, p_k$ defines a path from $x$ to $y$.  We prove a lemma that characterizes when this path is  shortest among all paths connecting $x$ and $y$ passing through the given sequence of cells and faces.  
 
Let $p$ be a point in an ambient Euclidean space in which the polyhedron $P$ lives and let $F$ be a face of $P$.  There is a well-defined orthogonal projection of $p$ onto the affine space spanned by $F$.  Denote this projection by $\pi_F(p)$.

\begin{lemma}[Zero-Tension Condition]\label{lem:localzero}
Let $P_1,F_1, P_2, F_2, \ldots, F_{k-1}, P_k$ be a sequence of cells of $\calc$ such that
$F_i \subseteq P_i \cap P_{i+1}$ for $i = 1, \ldots, k-1$, $x \in P_1$, $y \in P_k$.  Consider a sequence of points $p_0, \ldots, p_k$ where $p_0 = x$, $p_k =y$ and $p_i \in F_i^\circ$.  The sequence $p_0, \ldots, p_k$ defines a path from $x$ to $y$.  This is the shortest path through this sequence of cells and faces if and only if
\begin{equation}\label{eq:zerotension}
\pi_{F_i}\left(  \frac{ p_i - p_{i-1}}{ || p_i - p_{i-1}||}   \right) = \pi_{F_i}\left(  \frac{ p_{i+1} - p_{i}}{ || p_{i+1} - p_{i}||}   \right)
\end{equation}
for $i = 1, \ldots, k-1$. Here $||x||$ denotes the Euclidean norm.
\end{lemma}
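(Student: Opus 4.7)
My approach is to view this as a finite-dimensional convex minimization. A path from $x$ to $y$ through the cell sequence $P_1, F_1, \ldots, F_{k-1}, P_k$ is completely determined, up to length-preserving reparameterization, by its crossing points $p_i \in F_i$: inside each convex cell $P_i$ with its Euclidean metric, the shortest curve between two fixed boundary points is the straight segment. Thus finding the shortest path through the given sequence reduces to minimizing
\[
L(p_1, \ldots, p_{k-1}) \;=\; \sum_{i=1}^{k} \|p_i - p_{i-1}\|
\]
subject to $p_i \in F_i$, with $p_0 = x$ and $p_k = y$ fixed. Since the hypothesis places each $p_i$ in the relative interior $F_i^\circ$, the constraint is locally inactive, and first-order optimality reduces to vanishing of the component of $\nabla_{p_i} L$ along the affine span of $F_i$.

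First I would compute these gradients directly. Only the $i$-th and $(i{+}1)$-st summands of $L$ depend on $p_i$, and $\nabla_{p_i}\|p_i - p_{i-1}\| = (p_i - p_{i-1})/\|p_i - p_{i-1}\|$, while $\nabla_{p_i}\|p_{i+1} - p_i\| = -(p_{i+1} - p_i)/\|p_{i+1} - p_i\|$. Hence
\[
\nabla_{p_i} L \;=\; \frac{p_i - p_{i-1}}{\|p_i - p_{i-1}\|} \;-\; \frac{p_{i+1} - p_i}{\|p_{i+1} - p_i\|},
\]
and the stationarity condition $\pi_{F_i}(\nabla_{p_i} L) = 0$, using linearity of the orthogonal projection $\pi_{F_i}$, is exactly equation~(\ref{eq:zerotension}).

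To upgrade stationarity to a global minimum (and thereby handle both directions of the biconditional) I would invoke convexity: each summand $\|p_i - p_{i-1}\|$ is the composition of the Euclidean norm with an affine map, hence is convex in $(p_{i-1}, p_i)$, so $L$ is convex on the product affine space $\prod_{i=1}^{k-1}\mathrm{aff}(F_i)$. For a convex function on an affine space, any stationary point is a global minimizer and conversely any minimizer is stationary, which yields the ``if and only if'' in one stroke.

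The main obstacle is a bookkeeping one rather than a substantive difficulty: I must justify that the optimization can legitimately be performed over the product of affine spans $\mathrm{aff}(F_i)$ instead of over $\prod F_i$ itself. The relative-interior hypothesis handles this, because small perturbations of $p_i$ inside $\mathrm{aff}(F_i)$ remain in $F_i$, making the optimality condition the unconstrained one along these directions. A secondary nuisance is to ensure each $\|p_i - p_{i-1}\| > 0$ so the unit vectors are defined; if some consecutive $p_i = p_{i-1}$ occurs, the two adjacent cells can be merged and the intermediate face deleted without changing either the path or its length, reducing to a shorter sequence in which the lemma applies.
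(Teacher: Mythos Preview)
Your proposal is correct and follows essentially the same approach as the paper: set up the length as a function of the breakpoints, compute the gradient in each $p_i$, use the relative-interior hypothesis to reduce optimality to vanishing of the projected gradient, and invoke convexity to obtain the biconditional. The only cosmetic difference is that you argue convexity directly from the norm-of-affine form, whereas the paper cites its later Theorem~\ref{thm:polish} for this; your version is slightly more self-contained.
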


Said concisely, the shortest path will have opposite projections of the unit vector in the direction of $(p_{i-1} - p_i)$ and $p_{i+1} - p_i$.  

\begin{proof}
The shortest path passing through this sequence of orthants, in the given order, will minimize the function:

$$
f(q_1, \ldots, q_k) = ||x - q_1||  + || q_1 - q_2 || + \cdots  + ||q_{k-1} - y||
$$
where $q_i \in F_i$, $i = 1, \ldots, k-1$.    Since we have assumed that $p_i \in F_i^\circ$, such a minimizer must make the projection of the gradient of $f$ onto $F_i^\circ$ be zero.  

Since the derivative of $||x||$ with respect to $x_i$ is $\frac{x_i}{||x||}$ we see that the gradient with respect to $q_i$ is 
$$
 - \pi_{F_i}\left( \frac{q_{i-1} - q_{i}}{||q_{i-1} - q_{i}||}  \right) + \pi_{F_i}\left( \frac{q_{i} - q_{i+1}}{||q_{i} - q_{i+1}||}  \right). 
$$  
This is zero when evaluated at the sequence of points $p_1, \ldots, p_{k-1}$ for all $i$, if and only if Equation (\ref{eq:zerotension}) is satisfied.

Since minimizing the function $f$ is a convex optimization problem by the upcoming Theorem \ref{thm:polish}, the only critical point is the unique minimum.
\end{proof}

Thus we have shown that every geodesic satisfies the Zero-Tension Condition.


\subsection{No Shortcut Condition}\label{sec:noShortcut}
Define a path $\Gamma$ to be a \emph{local geodesic} if there exists some $\epsilon > 0$ so that every subpath of $\Gamma$ of length $\leq \epsilon$ is the shortest path between its endpoints.  The following lemma from \cite{OwenProvan11} (and the more general version in \cite[Chap. II.1, Prop. 1.4]{BridsonHaefliger99}) shows that checking this local condition is sufficient to determine the geodesic.

\begin{lemma}\cite[Lemma 2.1]{OwenProvan11}
In a CAT(0) space, every local geodesic is a geodesic.
\end{lemma}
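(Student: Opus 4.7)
The plan is to parameterize $\Gamma : [0, L] \to X$ by arc length and analyze the set
\[
T = \{t \in [0, L] : d(\Gamma(0), \Gamma(t)) = t\}.
\]
Showing $T = [0, L]$ gives $d(\Gamma(0), \Gamma(L)) = L$, so $\Gamma$ realizes the distance between its endpoints; combined with the uniqueness of geodesics in a CAT(0) space, this identifies $\Gamma$ as the geodesic from $\Gamma(0)$ to $\Gamma(L)$. The set $T$ is closed by continuity of $d$ and $\Gamma$, and it contains an initial interval $[0, \epsilon_0]$ by the local geodesic hypothesis. Since $[0, L]$ is connected, it suffices to prove that $T$ is open in $[0, L]$.

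The openness of $T$ is the main step, and the expected main obstacle. Fix $t_0 \in T$ with $t_0 < L$, and pick $\delta \in (0, t_0)$ and $\eta > 0$ small enough that the local geodesic property applies on $[t_0 - \delta, t_0 + \eta]$; set $A = \Gamma(0)$, $B = \Gamma(t_0)$, $C = \Gamma(t_0 + \eta)$, and $P = \Gamma(t_0 - \delta)$. Because $t_0 \in T$, the arc $\Gamma|_{[0, t_0]}$ is the geodesic $[A, B]$ and $P$ lies on it at distance $\delta$ from $B$; by the local geodesic hypothesis, $\Gamma|_{[t_0, t_0 + \eta]}$ is the geodesic $[B, C]$ and moreover $d(P, C) = \delta + \eta$. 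Now apply the CAT(0) condition to the geodesic triangle $ABC$ with comparison triangle $\bar A \bar B \bar C$ in $\rr^2$, and let $\bar P$ be the comparison point of $P$ on $[\bar A, \bar B]$:
\[
\delta + \eta \;=\; d(P, C) \;\leq\; |\bar P - \bar C|.
\]
On the other hand, the Euclidean triangle inequality in $\bar P \bar B \bar C$ yields $|\bar P - \bar C| \leq \delta + \eta$, with equality only if $\bar B$ lies on the segment $[\bar P, \bar C]$. Hence $\bar A, \bar B, \bar C$ are collinear, which forces $|\bar A - \bar C| = t_0 + \eta$. By construction of the comparison triangle, $d(A, C) = |\bar A - \bar C| = t_0 + \eta$, so $t_0 + \eta \in T$.

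The crux of the argument is the tension between the CAT(0) inequality, which forces the chord $PC$ to be no longer than in the Euclidean comparison, and the local geodesic property, which forces it to be at least $\delta + \eta$. These two bounds pinch the comparison triangle into a degenerate one, and this degeneracy propagates the distance-realizing property of $\Gamma$ past the arbitrary point $t_0 \in T$, giving $T = [0,L]$ and completing the proof.
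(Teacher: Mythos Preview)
The paper does not prove this lemma; it simply cites \cite{OwenProvan11} and \cite[Chap.~II.1, Prop.~1.4]{BridsonHaefliger99} and moves on. Your argument is correct and is essentially the standard proof found in Bridson--Haefliger: use the local geodesic property to force a chord in the comparison triangle to be tight, read off collinearity, and propagate.

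Two small points of hygiene, neither a real gap. First, the openness step picks $\delta \in (0, t_0)$, tacitly assuming $t_0 > 0$; this is harmless since you already have $[0, \epsilon_0] \subseteq T$, but the argument is cleaner phrased as ``set $t^* = \sup T$; by closedness $t^* \in T$; if $t^* < L$, apply the extension step at $t^*$ to get $t^* + \eta \in T$, a contradiction.'' Second, you establish $t_0 + \eta \in T$ but not directly that a full neighborhood of $t_0$ lies in $T$; this follows once you observe that $T$ is downward-closed in $[0, L]$ (if an arc-length-parameterized path realizes the distance at time $t$, the triangle inequality forces it to do so at every earlier time), which is implicit in your assertion that $\Gamma|_{[0, t_0]}$ is the geodesic $[A, B]$.
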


This implies that we only need to check if a geodesic is locally shortest where it bends.  Since we have shown that a geodesic is contained in a valid cube sequence and satisfies the Zero-Tension Condition, we can assume, without loss of generality, that our path also satisfies these two conditions.

We now need to introduce some notation for analyzing this restricted scenario.  Let $P_1 = C(I_1, M_1), P_2 = C(I_2,M_2), ..., P_k = C(I_k, M_k)$ be the valid cube sequence corresponding to the path, with breakpoint $p_i$ at its intersection with $P_i$ and $P_{i+1}$. 

\begin{lemma} \label{lem:intervalOrigin}
Every maximal cube in $X[P_i,P_{i+1}]$ contains the vertex corresponding to the order ideal $I_i$.
\end{lemma}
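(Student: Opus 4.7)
My plan is to identify the interval $X[P_i, P_{i+1}]$ with the cube complex $X_{Q^*}$ of a smaller poset $Q^*$ on the set $B := M_i \cup M_{i+1}$ (with order inherited from $Q$), and then verify that the vertex $v_1$ corresponding to $I_i$ lies in every maximal cube of $X_{Q^*}$ by analyzing the structure of $Q^*$.

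First I would verify that $v_1 \in P_i \cap P_{i+1}$. Since each $M_j$ is both a maximal antichain of $Q$ and a subset of $(I_j)_{max}$, any element of $(I_j)_{max} \setminus M_j$ would have to be comparable to some element of $M_j$, contradicting that both are maximal in $I_j$; hence $M_j = (I_j)_{max}$ for $j = i, i+1$. Combined with condition (2) of valid cube sequences, this forces $M_{i+1} \setminus M_i = I_{i+1} \setminus I_i$, so $I_i = I_{i+1} \setminus T$ with $T = I_{i+1} \setminus I_i \subseteq M_{i+1}$ exhibits $v_1$ as a vertex of $P_{i+1}$ (and trivially of $P_i$).

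Next, letting $A := I_i \setminus B$, I would carry out a hyperplane-by-hyperplane analysis to show that a cube $C(J, N)$ of $X_Q$ lies in $X[P_i, P_{i+1}]$ iff $A \subseteq J \subseteq A \cup B$ and $N \subseteq B$: each element of $A$ belongs to every vertex of $P_i$ and of $P_{i+1}$ (since it lies in $I_i \setminus M_i$ and in $I_{i+1} \setminus M_{i+1}$) and is therefore in every interval vertex; each element of $B$ is split by some edge geodesic between a vertex of $P_i$ and a vertex of $P_{i+1}$; and every interval vertex is contained in $I_{i+1} = A \cup B$. Sending $C(J^*, N^*) \in X_{Q^*}$ to $C(A \cup J^*, N^*) \in X$ then yields an isomorphism $X[P_i, P_{i+1}] \cong X_{Q^*}$, under which maximal cubes correspond to maximal antichains $N^*$ of $Q^*$ via $C(Q^*_{\leq N^*}, N^*)$, and $v_1$ corresponds to the order ideal $S_1 = I_i \cap B = M_i$.

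To conclude, I would exploit the structure of $Q^*$: elements of $M_i \cap M_{i+1}$ are isolated in $Q^*$ (since they are maximal in $I_{i+1}$), elements of $M_{i+1} \setminus M_i = I_{i+1} \setminus I_i$ are maximal in $Q^*$, and each element of $M_i \setminus M_{i+1}$ is minimal in $Q^*$ and has some element of $M_{i+1} \setminus M_i$ strictly above it (otherwise it would be maximal in $I_{i+1}$, contradicting its exclusion from $M_{i+1}$). For any maximal antichain $N^*$ of $Q^*$: the isolated elements of $M_i \cap M_{i+1}$ must belong to $N^*$, and the minimal elements of $M_i \setminus M_{i+1}$ either lie in $N^*$ or sit below some element of it by maximality, giving $M_i \subseteq Q^*_{\leq N^*}$; and since $B \setminus M_i = M_{i+1} \setminus M_i$ consists of maximal elements of $Q^*$, any such element lying in $Q^*_{\leq N^*}$ must actually lie in $N^*$, giving $Q^*_{\leq N^*} \setminus M_i \subseteq N^*$. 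Together these identities say $M_i = Q^*_{\leq N^*} \setminus T$ for some $T \subseteq N^*$, exhibiting $v_1$ as a vertex of every maximal cube.

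The main obstacle is the identification $X[P_i, P_{i+1}] \cong X_{Q^*}$: the hyperplane-by-hyperplane analysis and the verification that cube, face, and maximality structures transport correctly both require care, particularly in showing that an arbitrary consistent order ideal of the form $A \cup S$ actually arises as an interval vertex (which I would do by constructing explicit witnesses $V' \in P_i$ and $W' \in P_{i+1}$). Once this identification is established, the remaining analysis of $Q^*$ is a clean combinatorial check about maximal antichains in its two-level-plus-isolated structure.
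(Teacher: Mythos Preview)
Your proof is correct and takes a more structural route than the paper's. You explicitly identify $X[P_i, P_{i+1}] \cong X_{Q^*}$ for $Q^* = Q|_{M_i \cup M_{i+1}}$, describe $Q^*$ as two-level-plus-isolated, and then show that for every maximal antichain $N^*$ of $Q^*$ one has $M_i \subseteq Q^*_{\leq N^*}$ and $Q^*_{\leq N^*} \setminus M_i \subseteq N^*$. The paper instead works directly with a maximal cube $C(I,M)$ in the interval: from $I_i \setminus M_i \subseteq I \setminus M$ and $I \subseteq I_{i+1}$ it proves $I_i \subseteq I$ by a short comparability case analysis against $M$, then observes that $I \setminus I_i \subseteq I_{i+1} \setminus I_i \subseteq M_{i+1}$ consists of maximal elements of $I$ and hence lies in $M$, so $I_i = I \setminus (I \setminus I_i)$ is a vertex of $C(I,M)$. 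Your approach is longer but makes the geometry explicit and essentially establishes the subsequent corollary (that the interval is a truncated orthant space with origin $I_i$) along the way; the paper's element chase is quicker but defers that picture. As for the obstacle you flag: the single pair of witnesses $V' = I_i \setminus M_i \in P_i$ and $W' = I_{i+1} \in P_{i+1}$ already yields $[P_i, P_{i+1}] = \{\,U : A \subseteq U \subseteq I_{i+1}\,\}$, so the identification with $X_{Q^*}$ follows directly without a full hyperplane-by-hyperplane analysis.
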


\begin{proof}
Let $C(I,M)$ be any maximal cube in the interval $X[P_i, P_{i+1}]$. 
Then we have $I_i \bs M_i \subseteq I \bs M$ and $I \subseteq I_{i+1}$.  We now show
that $I_i \subseteq I$. Let $x \in I_i$. If $x \in M$ then $x \in I$, so assume $x
\notin M$. By the maximality of $M$, $x$ must be comparable with an element $m$ of
$M$. If $x > m$ then $m$ is not maximal in $I_i$, so $m \in I_i \bs M_i \subseteq I
\bs M$, contradicting that $m \in M$. If $x < m$ then $x$ is in $I$ and not maximal
in $I$, so $x \in I - M \subseteq I$.

Since $P_i$ and $P_{i+1}$ are part of a valid cube sequence, $I_{i+1} \bs I_i
\subseteq M_{i+1}$.  So $I \bs I_i \subseteq M_{i+1}$, and hence $I \bs I_i$  is an
anti-chain of maximal elements of $I$.  Since $C(I,M)$ is a maximal cube, $M$ is the
set of maximal elements of $I$.  Thus we have $I \bs I_i \subseteq M$, and therefore
the order ideal $I \bs (I \bs I_i) = I_i$ is a vertex in $C(I,M)$.  
\end{proof}

To determine if the path is locally shortest at breakpoint $p_i$, it is sufficient to determine if there is a shorter path from $p_{i-1}$ to $p_{i+1}$ through the interval $X[P_i,P_{i+1}]$.  Since every maximal cube in $X[P_i,P_{i+1}]$ shares the vertex $I_i$, the interval $X[P_i,P_{i+1}]$ forms what is called a \emph{truncated CAT(0) orthant space}, which we now define.


A $k$-dimensional \emph{orthant} is a cone isometric to $\rr_{\geq 0}^k$.  An orthant space is a collection of orthants with a common origin vertex.  We are interested in orthant spaces which are CAT(0):

\begin{defn} A \emph{d-dimensional CAT(0) orthant space} is denoted by $\mathcal{T} = (V, \Omega)$, where $V$ is the set of \emph{coordinates} and $\Omega$ is a $d$-dimensional flag simplicial complex on $V$.  Every face $F$ of $\Omega$ is associated with the orthant
\[ \calo_F = \rr^{|F|}_{+},
\]
that is, the set of non-negative vectors with components associated with face $F$.  The space $\mathcal{T}$ is the union of all orthants $\{\calo_F: F \in \Omega\}$, with the orthants identified along the subcones of their common coordinates.
\end{defn}

In a \emph{truncated CAT(0) orthant space}, each orthant is replaced with a unit cube of the same dimension.

\begin{cor}
If the cubes $P_i$ and $P_{i+1}$ are part of a valid cube sequence, then $X[P_i,P_{i+1}]$ is a truncated CAT(0) orthant space, whose origin is the vertex corresponding to order ideal $I_i$.
\end{cor}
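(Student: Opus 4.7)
The plan is to combine Theorem~\ref{th:poset} with Lemma~\ref{lem:intervalOrigin} to recast the interval $X[P_i, P_{i+1}]$, rooted at $I_i$, as the cube complex of an antichain with inconsistent pairs, which is exactly the combinatorial data of a truncated CAT(0) orthant space.

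First I would check that $I_i$ is a vertex of $X[P_i, P_{i+1}]$: it lies in $P_i = C(I_i, M_i)$ via the empty subset of $M_i$, and in $P_{i+1} = C(I_{i+1}, M_{i+1})$ via the subset $I_{i+1} \setminus I_i$, which is contained in $M_{i+1}$ by condition~(2) of a valid cube sequence. The remark following Definition~\ref{def:interval} then lets me rewrite $X[P_i, P_{i+1}] = X[v_i, w_i]$ for a suitable pair of vertices, and Lemma~\ref{lem:vertices.are.order-ideals} identifies this interval with $X_Q$ for a poset $Q$ with no inconsistent pairs; in particular it is itself a finite CAT(0) cube complex. Applying Theorem~\ref{th:poset} to this complex rooted at $I_i$, I obtain a finite poset with inconsistent pairs $R$ and an isomorphism $X[P_i, P_{i+1}] \cong X_R$ under which $I_i$ corresponds to the empty order ideal of $R$.

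The hard part is to show $R$ is an antichain. By Lemma~\ref{lem:intervalOrigin}, every maximal cube of $X_R$ contains the origin $\emptyset$. Since maximal cubes of $X_R$ have the form $C(R_{\leq A}, A)$ for a maximal consistent antichain $A$, and $\emptyset$ is a vertex of such a cube precisely when $R_{\leq A} = A$, every element of every maximal consistent antichain must be minimal in $R$. Because $R$ is finite, any element $r \in R$ can be extended to a maximal consistent antichain, so every $r \in R$ is minimal and $R$ carries no nontrivial order relations.

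Finally I would read off the truncated orthant space structure from this. With $R$ an antichain, its consistent order ideals are exactly its consistent subsets, and its cubes $C(A, A)$ for consistent subsets $A \subseteq R$ all contain the origin and exhaust the cubes of $X_R$. Let $\Omega$ be the simplicial complex on the vertex set $R$ whose faces are the consistent subsets of $R$; since consistency is a pairwise condition, $\Omega$ is flag. The cubes of $X_R$ match the faces of $\Omega$ face-by-face, realizing $X[P_i, P_{i+1}] = X_R$ as the truncated CAT(0) orthant space $(R, \Omega)$ with origin $I_i$.
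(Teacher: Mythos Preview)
Your proof is correct and rests on the same key input as the paper's one-line argument, namely Lemma~\ref{lem:intervalOrigin}. Where the paper simply invokes that lemma together with the definition of a truncated CAT(0) orthant space, you have unpacked the deduction via Theorem~\ref{th:poset}, making explicit (by showing the re-rooted poset $R$ is an antichain and that the resulting simplicial complex $\Omega$ is flag) what the paper leaves to the reader.
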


\begin{proof}
This follows directly from Lemma~\ref{lem:intervalOrigin} and the definition of a truncated CAT(0) orthant space.
\end{proof}

A polynomial time algorithm for finding a shorter path from $p_{i-1}$ to $p_{i+1}$, if one exists, was found for a certain kind of CAT(0) orthant space by Provan and the second author in \cite{OwenProvan11}, and shown to apply to all CAT(0) orthant spaces, including truncated CAT(0) orthant spaces in \cite{MillerOwenProvan10}.


\medskip

Note that $M_i$ and $M_{i+1}$ are the sets of indices of the variable coordinates in the cubes $P_i$ and $P_{i+1}$, respectively. If $z = (z_1, z_2, ..., z_s)$ is any point in the cube complex $X$, and $C$ is some subset of the indices $\{1, 2, ..., s\}$, then let $z |_C$ be the vector with $i$-th coordinate 0 if $i \notin C$ and $z_i$ otherwise.

The following lemma is a restatement in the language of this paper of Theorem 2.5 from \cite{OwenProvan11}, which was shown to generalize to truncated CAT(0) orthant spaces in \cite{MillerOwenProvan10}.

\begin{lemma}[No Shortcut Condition]\label{lem:noShortcut}
A path through a valid cube sequence that satisfies the Zero-Tension Condition is a geodesic if and only if the following condition holds at each breakpoint $p_i$:

For each pair of partitions $A_i \cup B_i$ of $M_i - M_{i+1}$ and $A_{i+1} \cup B_{i+1}$ of $M_{i+1} - M_i$, at least one of which is non-trivial, such that there exists a cube in $X[P_i, P_{i+1}]$ whose variable coordinates have indices $(M_i \cap M_{i+1}) \cup B_i \cup A_{i+1}$, we have

\[
\norm{ (p_i -  p_{i-1}) |_{A_i }} \cdot  \norm{(p_{i+1} - p_i) |_{B_{i+1} } }  \geq  \norm{ ( p_i - p_{i-1}) |_{B_i} } \cdot \norm{ (p_{i +1} - p_i) |_{A_{i+1}} }.
\]

\end{lemma}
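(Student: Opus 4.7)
My plan is to reduce the statement to a purely local question at each breakpoint $p_i$, identify the relevant local geometry as a truncated CAT(0) orthant space, and then invoke the Owen--Provan/Miller--Owen--Provan criterion for geodesics in such spaces.

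First, because every local geodesic in a CAT(0) space is a geodesic (the lemma cited just above), it suffices to verify that the path is locally shortest near each breakpoint $p_i$. A shorter local alternative between $p_{i-1}$ and $p_{i+1}$ can be assumed to stay inside the interval $X[P_i, P_{i+1}]$: the same projection argument used in Lemma~\ref{lem:geoInterval} shows that any detour outside this interval can only be shortened by projecting back in. The question thus becomes whether there is a strictly shorter path from $p_{i-1}$ to $p_{i+1}$ through $X[P_i, P_{i+1}]$.

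Second, by the corollary preceding the statement, $X[P_i, P_{i+1}]$ is a truncated CAT(0) orthant space whose origin is the vertex corresponding to $I_i$. In this local model, the coordinates indexed by $M_i \cap M_{i+1}$ are axes common to both $P_i$ and $P_{i+1}$, those in $M_i - M_{i+1}$ live only in $P_i$, and those in $M_{i+1} - M_i$ live only in $P_{i+1}$. A competitor path can try to save length by passing through a third maximal cube of $X[P_i, P_{i+1}]$, which, by Lemma~\ref{lem:intervalOrigin}, shares the origin with $P_i$ and $P_{i+1}$ and whose variable coordinates form a set of the shape $(M_i \cap M_{i+1}) \cup B_i \cup A_{i+1}$, where $A_i \cup B_i$ partitions $M_i - M_{i+1}$ and $A_{i+1} \cup B_{i+1}$ partitions $M_{i+1} - M_i$, with at least one partition nontrivial.

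Third, the effect of such a swap can be written down explicitly in the Euclidean orthant model: the modified path finishes the $A_i$-components and begins the $A_{i+1}$-components earlier, while deferring the $B_{i+1}$-components and retaining the $B_i$-components, with the $M_i \cap M_{i+1}$ movement distributed across the legs of the new piecewise-linear path. The leading-order calculation, driven by the Minkowski-type comparison $\sqrt{s^2+t^2}+\sqrt{u^2+v^2}$ versus $\sqrt{(s+u)^2+(t+v)^2}$ applied to the pairs $(A_i, A_{i+1})$ and $(B_i, B_{i+1})$, shows that some such modification strictly shortens the path precisely when the stated inequality fails for some admissible partition. This is exactly the content of Theorem 2.5 of \cite{OwenProvan11}, extended to truncated CAT(0) orthant spaces in \cite{MillerOwenProvan10}, which I would invoke as a black box once the orthant structure of $X[P_i, P_{i+1}]$ is in hand.

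The main obstacle is this third step: one must verify that among all candidate shortcuts through \emph{every} admissible third cube of $X[P_i, P_{i+1}]$ the leading-order change in length is captured by exactly the displayed Cauchy--Schwarz-type inequality, and that the existence condition on the cube with variable coordinates $(M_i \cap M_{i+1}) \cup B_i \cup A_{i+1}$ corresponds precisely to the geometric feasibility of the corresponding swap in the orthant space. Since this analysis is already carried out in \cite{OwenProvan11, MillerOwenProvan10}, the remaining work is essentially a careful dictionary between their orthant-space notation and the partitions $A_i, B_i, A_{i+1}, B_{i+1}$ used here.
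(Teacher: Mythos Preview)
Your approach is essentially identical to the paper's: the paper does not prove this lemma directly but presents it as a restatement of Theorem~2.5 of \cite{OwenProvan11}, extended to truncated CAT(0) orthant spaces in \cite{MillerOwenProvan10}, after having set up exactly the local reduction you describe (local geodesics are global, reduce to $X[P_i,P_{i+1}]$, identify it as a truncated CAT(0) orthant space via Lemma~\ref{lem:intervalOrigin} and its corollary). Your third step is more explicit than the paper's own treatment, but the dictionary you sketch is precisely what is needed to match the present notation to that of the cited results.
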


The previous inequality can be written more simply (but less symmetrically) as:
\[
\norm{ (\mathbf{1} -  p_{i-1}) |_{A_i }} \cdot  \norm{ p_{i+1} |_{B_{i+1} } }  \geq  \norm{ ( \mathbf{1} - p_{i-1}) |_{B_i} } \cdot \norm{ p_{i +1} |_{A_{i+1}} }
\]
since the $(M_{i+1}-M_i)$-coordinates of $p_i$ equal $0$ 
and its $(M_i-M_{i+1})$-coordinates equal $1$.

Note that $M_i$ and $M_{i+1}$ are antichains and, when we remove their intersection, we obtain a bipartite poset on $(M_i - M_{i+1}) \cup (M_{i+1} - M_i)$. The intermediate cubes of Lemma \ref{lem:noShortcut} that we check correspond to ``intermediate antichains", which contain $M_i \cap M_{i+1}$ and some elements ($B_i$ in the bottom level and $A_{i+1}$ in the top level) of this poset.

We can then use the algorithm for solving the Extension Problem given in \cite[Section~3]{OwenProvan11} for each breakpoint $p_i$, to check whether there is a cube $C$ and partitions $A_i \cup B_i$ and $A_{i+1} \cup B_{i+1}$ which violate the above lemma. If they do, return the cube $C$, which we will add to the cube sequence. (In Lemma \ref{lemma:valid} we will show that the result is still a valid cube sequence.) We will call this the \emph{Shortcut-Checking Algorithm}, which has running time $O(n^3)$, where $n = \max(\dim P_i, \dim P_{i+1}) =  \max(|M_i|, |M_{i+1}|)$.  

The Shortcut-Checking Algorithm determines if there are partitions $A_i \cup B_i$ and $A_{i+1} \cup B_{i+1}$ violating the lemma. To do so, it solves a Minimum Weight Vertex Cover problem on the bipartite graph $G$ induced by the poset on $(M_i - M_{i+1}) \cup (M_{i+1} - M_i)$. The weights of a vertex $j$ in $M_i - M_{i+1}$ and a vertex $k$ in $M_i - M_{i+1}$ are 
 \[
\left(\frac{(p_i-p_{i-1})_j}{||p_i-p_{i-1}||}\right)^2 \quad  \textrm{ and } \quad  \left(\frac{(p_{i+1}-p_i)_k}{||p_{i+1}-p_i||}\right)^2
 \]
respectively. 
A \emph{vertex cover} is a set $S$ of vertices  such that every edge is incident to a vertex in $S$; we seek a vertex cover of minimum weight.
 
One easily checks that partitions violating Lemma \ref{lem:noShortcut} exist if and only if there is a minimum weight vertex cover with total weight less than $1$.  If it does exist, then the sets $A_{i+1}$ and $B_i$ consist of the elements not in the cover. They form a maximal independent set, since the vertex cover has minimum weight.  Thus $(M_{i+1} \cap M_i) \cup A_{i+1} \cup B_i$ is a maximal anti-chain in the poset.  In this case, the Shortcut-Checking Algorithm inserts a cube $C$ into the sequence, with $C = C(I,M)$ where $I = I_{i+1} \bs B_{i+1} = I_i \cup B_i$ and $M =  (M_{i+1} \cap M_i) \cup A_{i+1} \cup B_i$.  Since $M$ is a maximal anti-chain, the cube $C$ inserted by the algorithm is maximal.

\begin{ex}\label{ex:shortcut}
Figure~\ref{fig:blowup} shows the four cubes forming $X[P_3,P_4]$ in the interval $X[v,w]$ in Figure~\ref{fig:validCubeSeq}, along with a segment of the shortest path from $x$ to $y$ contained in the given valid cube sequence.  The two cubes in the valid cube sequence that are in the figure are $C(I_3, M_3)$, where $I_3 = \{1, 2, 5, 6\}$ and $M_3 = \{2,6\}$, and $C(I_4, M_4)$, where $I_4 = \{1, 2, 3, 5, 6, 7\}$ and $M_4 = \{3, 7\}$.  Notice that this segment is not a geodesic in the cube complex, since it would be shorter if it passed through the cube $C(I,M)$.  In this case, the partitions returned by the Shortcut-Checking Algorithm are $A_3 = \{2\}$ and $B_3 = \{6\}$ of $M_3 - M_4 = M_3 = \{2, 6\}$ and $A_4 = \{3\}$ and $B_4 = \{7\}$ of $M_4 - M_3 = M_4 = \{3, 7\}$.  Thus $I = \{1,2,3,5,6\}$ and $M = \{3,6\}$.  The inequality of Lemma \ref{lem:noShortcut} is not satisfied:  

\begin{align*}
\norm{ (\mathbf{1} -  p_2) |_{A_3}} \cdot  \norm{ p_4 |_{B_4} } = \norm{ (0,1,0,0,0,0,0)} \cdot \norm{ (0,0,0,0,0,0,0.45)}= 1 \cdot 0.45 \\
 \leq  1 \cdot 1 =  \norm{ (0,0,0,0,0,1,0)}  \cdot \norm{ (0,0,1,0,0,0,0)} = \norm{ ( \mathbf{1} - p_2) |_{B_3} } \cdot \norm{ p_4 |_{A_4} }
 \end{align*}

\end{ex}

\begin{figure}[h] \label{fig:blowup}
\begin{center}
\includegraphics[width=3in]{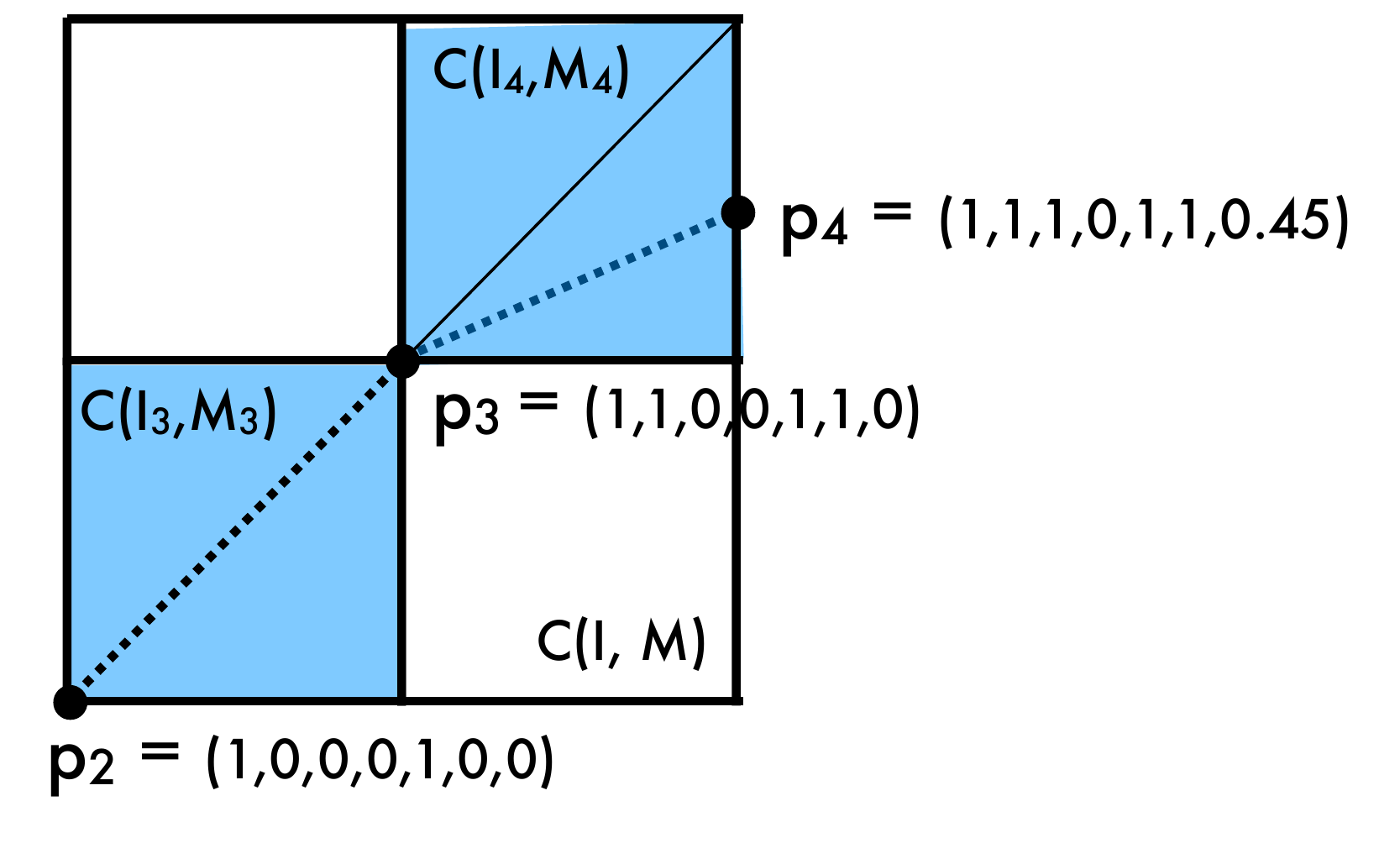} 
\caption{The four cubes forming in $X[P_3,P_4]$ in the interval $X[v,w]$ in Figure~\ref{fig:validCubeSeq}.  The dashed line represents part of the shortest path from $x$ to  $y$ in the given valid cube sequence.  The Shortcut Condition returns cube $C(I,M)$, as explained in Example~\ref{ex:shortcut}. }
\label{fig:blowup}
\end{center}
\end{figure}

This gives us a characterization of geodesics in CAT(0) cube complexes.  

\bigskip

\begin{thm}\label{thm:characterization}
A path from $x$ to $y$ in a CAT(0) cube complex is the geodesic if and only if it
\begin{enumerate}
\item is contained in a valid cube sequence,
\item consists of the union of a finite number of line segments, 
\item satisfies the Zero-Tension Condition, and
\item  satisfies the No Shortcut Condition.
\end{enumerate}
\end{thm}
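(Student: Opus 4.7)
The plan is to observe that Theorem \ref{thm:characterization} is essentially a synthesis of the three main lemmas of Sections \ref{sec:combin.geodesics} and \ref{sec:char}, so both implications reduce to citing these lemmas after verifying some straightforward setup. Condition (2), the finite piecewise-linear structure, is what ensures the Zero-Tension and No Shortcut conditions actually make sense, since both are stated about a path with a well-defined sequence of breakpoints $p_i$ at the intersection faces of consecutive maximal cubes.

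For the forward direction ($\Rightarrow$), let $\gamma$ be the unique geodesic from $x$ to $y$. Condition (1) is exactly Lemma \ref{lem:geoInValidCubeSeq}. Condition (2) holds because the restriction of $\gamma$ to each convex Euclidean cube of its (finite) valid cube sequence must be a straight line segment, since any other path in a single convex cube admits a shortening, contradicting the uniqueness of the geodesic. Condition (3) is the ``only if'' direction of Lemma \ref{lem:localzero}, applied to the sequence of maximal cubes $P_i$ and their pairwise intersection faces $F_i = P_i \cap P_{i+1}$. Condition (4) is the ``only if'' direction of Lemma \ref{lem:noShortcut}.

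For the reverse direction ($\Leftarrow$), let $\gamma$ be any path from $x$ to $y$ satisfying (1)--(4). Conditions (1) and (2) identify $\gamma$ as a piecewise linear path through a valid cube sequence, whose breakpoints $p_i$ lie in the shared faces $F_i$ between consecutive maximal cubes $P_i$ and $P_{i+1}$. The ``if'' direction of Lemma \ref{lem:noShortcut} then says precisely that such a path, satisfying the Zero-Tension Condition (3) and the No Shortcut Condition (4), is the unique geodesic from $x$ to $y$.

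The main obstacle, already resolved in the lemmas that precede the theorem, is the ``if'' direction of Lemma \ref{lem:noShortcut}. Its proof proceeds by invoking the CAT(0) fact that every local geodesic is a global geodesic (Lemma 2.1 of \cite{OwenProvan11}), thereby reducing the verification of being a geodesic to local shortness at each breakpoint $p_i$ inside the interval $X[P_i, P_{i+1}]$. By the Corollary preceding Lemma \ref{lem:noShortcut}, this interval is a truncated CAT(0) orthant space rooted at the vertex corresponding to the order ideal $I_i$, and within such spaces the No Shortcut Condition is known to detect every possible local improvement, by the generalization of the main results of \cite{OwenProvan11} to the truncated CAT(0) orthant space setting carried out in \cite{MillerOwenProvan10}. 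Once this machinery is in place, the assembly of the characterization is immediate.
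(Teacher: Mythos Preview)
Your proof is correct and follows essentially the same approach as the paper: both directions are obtained by invoking Lemma~\ref{lem:geoInValidCubeSeq}, Lemma~\ref{lem:localzero}, and Lemma~\ref{lem:noShortcut} in turn, with the reverse implication resting entirely on the ``if'' direction of Lemma~\ref{lem:noShortcut}. Your additional paragraph unpacking why Lemma~\ref{lem:noShortcut} works (local-to-global via \cite{OwenProvan11}, the truncated orthant structure of $X[P_i,P_{i+1}]$, and the results of \cite{MillerOwenProvan10}) is accurate but goes beyond what the paper does at this point, since that reasoning is already absorbed into the proof of Lemma~\ref{lem:noShortcut} itself.
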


\begin{proof}
If the path is the geodesic, then by Lemma~\ref{lem:geoInValidCubeSeq} it is contained in a valid cube sequence.  Since a valid cube sequence satisfies the hypotheses of the Zero-Tension Condition by Proposition~\ref{prop:validproperties}.3, the geodesic must also satisfy the Zero-Tension Condition.  Finally, by Lemma~\ref{lem:noShortcut}, we have that the geodesic also satisfies the No Shortcut Condition.    If the path is contained in a valid cube sequence and satisfies both the Zero-Tension Condition and the No Shortcut Condition, then by Lemma~\ref{lem:noShortcut}, it is a geodesic.
\end{proof}


\section{Touring Problems}\label{sec:touring}

Let $S_1, S_2, \ldots, S_k$ be a sequence of regions in $\mathbb{R}^n$, and $x$ and $y$ two points in $\rr^n$.  The touring problem asks for the shortest path starting at $x$ and ending at $y$ and intersecting $S_1$, $S_2$, \ldots, $S_k$, in that order.  In other words, the touring problem asks for points $p_i \in S_i$ that minimize the sum of distances
$$
||x- p_1|| + ||p_1 -  p_2|| + \ldots + ||p_k-y||. 
$$
Note that without the restriction on fixing a prescribed order, this minimization problem includes the geometric traveling salesman problem as a special case and is NP-complete.  Fixing a prescribed ordering, however, and specifying the regions $S_i$ as convex polyhedra of polynomial complexity guarantees that the problem has a polynomial time solution using semidefinite programming \cite{PolishchukMitchell05}.  

\begin{thm} [\cite{PolishchukMitchell05}] \label{thm:polish}
Touring problems of convex polyhedra are convex optimization problems, and can be formulated as semidefinite programs. Furthermore, if there is a polynomial number of polyhedra, and each one has polynomial complexity (in terms of the bit complexity of the input), then the resulting semidefinite program can be solved in polynomial time.
\end{thm}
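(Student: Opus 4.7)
The plan is to verify the three assertions in turn: convexity, SDP representability, and polynomial-time solvability. For convexity, observe that the objective
\[
F(p_1, \dots, p_k) = \|x - p_1\| + \sum_{i=1}^{k-1} \|p_i - p_{i+1}\| + \|p_k - y\|
\]
is a finite sum of Euclidean norms precomposed with affine maps of the variables; each summand is convex as the composition of a norm with an affine map, and nonnegative sums of convex functions are convex. The feasible set $S_1 \times \cdots \times S_k$ is a product of convex polyhedra, hence itself convex, so the touring problem is a convex optimization problem, and any local minimum is automatically global (justifying the appeal to this theorem inside the proof of Lemma \ref{lem:localzero}).

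For the SDP reformulation, I would introduce scalar slack variables $t_0, \dots, t_k \geq 0$, replace the objective by $\sum_i t_i$, and impose $\|p_i - p_{i-1}\| \leq t_i$ (with $p_0 = x$, $p_{k+1} = y$). Each second-order cone constraint $\|v\| \leq t$ for $v \in \rr^n$ admits the standard Schur-complement reformulation as the linear matrix inequality
\[
\begin{pmatrix} t\, I_n & v \\ v^\top & t \end{pmatrix} \succeq 0,
\]
which is linear in $(v, t)$. Each polyhedral constraint $p_i \in S_i$ is a finite conjunction of linear inequalities, which packages into a diagonal LMI on additional scalar slacks. Assembling all of these LMIs as the diagonal blocks of one large positive semidefinite constraint yields an SDP whose total matrix dimension is linear in $k$, $n$, and the combined facet count of the $S_i$, with a linear objective in the $t_i$.

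Polynomial-time solvability then follows by invoking the classical interior-point theory of Nesterov and Nemirovski for semidefinite programs: given an SDP whose description has bit complexity polynomial in the input, their algorithm produces an $\varepsilon$-accurate optimizer in time polynomial in the input size and $\log(1/\varepsilon)$. The main obstacle is not in any single step but in the cumulative complexity bookkeeping: one must verify that the Schur-complement and polyhedral blocks remain of polynomial bit-complexity under the given encoding of the polyhedra, and that the accuracy $\varepsilon$ needed by the calling algorithm (here, to distinguish whether the Zero-Tension and No-Shortcut conditions are strictly satisfied) can itself be chosen polynomially small. Beyond that audit, the mathematical content is entirely classical convex analysis combined with off-the-shelf SDP technology.
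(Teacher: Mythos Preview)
The paper does not give its own proof of Theorem~\ref{thm:polish}; it is stated as a quoted result from \cite{PolishchukMitchell05} and used as a black box. So there is nothing in the paper to compare your argument against line by line.

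That said, your sketch is the standard way this result is established, and it is essentially correct. The convexity argument is complete as written. The SDP reformulation via the Schur-complement encoding of the second-order cone $\|v\|\le t$ is the textbook device (this is exactly how SOCPs embed into SDPs), and packaging the polyhedral constraints as diagonal LMIs is routine. Your final paragraph is honest about where the real work lies: the Nesterov--Nemirovski interior-point machinery gives polynomial time in the input size and $\log(1/\varepsilon)$, but one must audit that the resulting SDP has polynomial bit-complexity and that a suitable accuracy $\varepsilon$ suffices for downstream use. Those caveats are genuine---in the strict bit model, polynomial solvability of SDPs requires bounds on the magnitude of an optimal solution, which here follow because the feasible region is a bounded product of polytopes---but they are bookkeeping rather than mathematical obstacles, and \cite{PolishchukMitchell05} handles them. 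Your proposal is a faithful outline of that argument.
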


The importance of the touring problem for us is that the problem of finding the shortest path through a particular valid cube sequence between points $x$ and $y$ is a touring problem.  

\begin{prop}\label{prop:cubetouring}
Let $X_P$ be a CAT(0) cube complex. The problem of computing the shortest path from $x$ to $y$ lying in a given valid cube sequence can be solved by a touring problem which is itself solvable in polynomial time in $|P|$.
\end{prop}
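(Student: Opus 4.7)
The plan is to cast the problem as a touring problem in the standard embedding of $X_P$ into $[0,1]^{|P|}$ and then invoke Theorem \ref{thm:polish}. Write the given valid cube sequence as $V = C(I_1,M_1), C(I_2,M_2), \ldots, C(I_k,M_k) = W$. In the standard embedding, each cube $C(I_i,M_i)$ is a convex polytope in $[0,1]^{|P|}$ cut out by the equations $x_j = 1$ for $j \in I_i \setminus M_i$ and $x_j = 0$ for $j \notin I_i$, together with $0 \leq x_j \leq 1$ for $j \in M_i$. By Proposition \ref{prop:validproperties}(3), consecutive cubes meet exactly in the face $F_i := C(I_i, M_i \cap M_{i+1})$, which is itself a convex polytope in $[0,1]^{|P|}$ given by polynomially many linear equations and inequalities.

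First, I would observe that searching for the shortest path from $x$ to $y$ lying inside $C_1 \cup \cdots \cup C_k$ reduces to choosing break points $p_i \in F_i$ for $i = 1, \ldots, k-1$ and minimizing
\[
\|x - p_1\| + \|p_1 - p_2\| + \cdots + \|p_{k-1} - y\|.
\]
Indeed, any path through this sequence of cubes must cross each common face $F_i$ at some point, and replacing the portion of the path lying in $C_i$ by the straight segment from $p_{i-1}$ to $p_i$ can only decrease the length; since $C_i$ is convex, this straight segment stays inside $C_i$, so the resulting piecewise-linear path really lies in the cube sequence.

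Next, I would check that this optimization problem is a touring problem in the sense of Section \ref{sec:touring}. The regions $F_1, \ldots, F_{k-1}$ are convex polyhedra in $\mathbb{R}^{|P|}$. Their number is polynomially bounded: since $I_1 \subsetneq I_2 \subsetneq \cdots \subsetneq I_k$ are distinct consistent order ideals of $P$, we have $k \leq |P| + 1$. Each $F_i$ has polynomial complexity, since its defining equations and inequalities involve only coordinates indexed by elements of $P$. Thus we fall exactly within the hypotheses of Theorem \ref{thm:polish}, and the resulting semidefinite program can be solved in time polynomial in $|P|$.

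The only subtle point, and the one I would be most careful about, is verifying that this restricted optimization really computes the shortest path in the cube complex through the valid cube sequence, rather than merely the shortest polygonal path with vertices on the $F_i$. This is handled by the straightening argument above combined with the convexity of each cube $C_i$ in the standard embedding, which guarantees that the segments $p_{i-1}p_i$ lie entirely within $C_i$ and hence that the constructed path is an admissible path in $X_P$.
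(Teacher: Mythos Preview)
Your proof is correct and follows essentially the same approach as the paper: embed via the standard embedding, use Proposition~\ref{prop:validproperties}(3) to identify the intermediate faces $F_i = C(I_i, M_i \cap M_{i+1})$, reduce to minimizing the sum of Euclidean distances through these faces, and invoke Theorem~\ref{thm:polish}. Your version is somewhat more explicit---you spell out the straightening argument and the bound $k \leq |P|+1$ from the strict chain of ideals---whereas the paper simply states the touring problem and notes there are at most $|P|$ regions with at most $2|P|$ facets and $0/1$ coefficients; but the substance is the same.
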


\begin{proof}
Let 
$C(I_1, M_1), C(I_2, M_2), \ldots, C(I_k, M_k)$ be a valid cube sequence.  Using the standard embedding, we can embed the entire cube complex into $\rr^n$.  The shortest path through this cube complex is determined by where the geodesic crosses from cube $C(I_i, M_i)$ into cube $C(I_{i+1}, M_{i+1})$, for each $i = 1, \ldots,  k-1$.  By Proposition \ref{prop:validproperties}, this boundary is the cube $C(I_i, M_i \cap M_{i+1})$.  Thus, we must optimize the function
$$
||x- p_1|| + ||p_1- p_2|| + \ldots + ||p_{k-1}-y||. 
$$
subject to $p_i \in C(I_i, M_i \cap M_{i+1})$.  This is a touring problem with at most $n$ polyhedral regions, each with at most $2n$ facets, where $n$ is the number of elements in the poset $P$.  The coefficients appearing in any defining inequality are all $0$ or $1$.  Hence, this problem is solvable in polynomial time by Theorem \ref{thm:polish}.
\end{proof}

\section{An Algorithm to Compute Geodesics}
\label{sec:geod}

At this point, we have developed all the necessary tools to describe our algorithm for computing geodesics in CAT(0) cube complexes.

\begin{pr}  Given a poset with inconsistent pairs $P$ and two points $x$ and $y$ in the cubical complex $X_P$, find the geodesic path between $x$ and $y$ in $X_P$. 
\end{pr}

By Lemma~\ref{lem:geoInterval}, it is sufficient to find the geodesic between points $x$ and $y$ in the cube complex $X[v,w]$.  We first present the overall algorithm, and then analyze the steps in more detail.

\begin{alg}\label{alg:geodesic}
{\bf [Computing geodesics in a CAT(0) cube complex $X_P$].} \\
Input: A poset with inconsistent pairs $P$ and two points, $x,y$ in $X_P$.  \\
Output: A cube sequence containing the geodesic and the corresponding break points of the geodesic.  \\
\begin{enumerate}
\item Find vertices $v$ and $w$ of $X_P$ such that the geodesic is contained in $X[v,w]$, as described at the beginning of Section \ref{sec:combin.geodesics}. Reroot the complex at $v$, and construct the poset $Q$ such that $X[v,w] \cong X_Q$. 
\item Choose the extended normal cube path as a starting valid cube sequence $P_1, P_2, ..., P_k$.
\item Solve the Touring Problem associated with $P_1, P_2, ..., P_k$ to get the path $\gamma$.
\item Find the smallest valid cube sequence containing $\gamma$, and reset that to be $P_1, P_2, ..., P_k$.
\item  At each breakpoint, check if the No Shortcut condition holds, using the algorithm from Section~\ref{sec:zero}.
\begin{enumerate}
\item
If the No Shortcut condition fails to hold at some $p_i$, then it also returns a new cube $C(I,M)$ that passing through yields a shorter path from $x$ to $y$.  Add $C(I,M)$ to the cube sequence, and re-index.  Go to step 3. 
\item
If the No Shortcut condition holds at all breakpoints $p_1, p_2, ..., p_{k-1}$, then $\gamma$ is the geodesic and the algorithm terminates.
\end{enumerate}\end{enumerate}
\end{alg}

\begin{lemma}
Algorithm \ref{alg:geodesic} terminates in a finite number of steps. 
\end{lemma}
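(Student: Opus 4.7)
The plan is to combine a finiteness argument for the set of valid cube sequences with a strict monotonicity argument for the path length produced at each iteration, and conclude that no cube sequence can be revisited.

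First I would establish finiteness. Since $v$ and $w$ are vertices and every edge geodesic from $v$ to $w$ adds one element of $Q$ at a time, the order ideal $Q$ corresponding to $w$ must be finite. By Lemma \ref{lem:geoInterval} and the identification $X[v,w]\cong X_Q$, the search takes place in a finite cubical complex. Encoding valid cube sequences as partial linear extensions $f\colon Q\to[k]$, as observed right after the definition of valid cube sequence, immediately gives that the set of valid cube sequences from $x$ to $y$ is finite. Moreover, by Proposition \ref{prop:cubetouring} and the convexity in Theorem \ref{thm:polish}, the touring problem associated to a fixed valid cube sequence has a \emph{unique} optimum, so each valid cube sequence determines at most one candidate path $\gamma$ at any iteration of the algorithm.

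Next I would show that the length of $\gamma$ strictly decreases across each pass through the loop in Steps 3--5. Suppose that on some iteration the current valid cube sequence is $P_1,\dots,P_k$ with touring optimum $\gamma$ and breakpoints $p_0,\dots,p_k$, and the No Shortcut Condition fails at some $p_i$. By the discussion after Lemma \ref{lem:noShortcut}, the Shortcut-Checking Algorithm then returns a cube $C(I,M)$ through which one can replace the local piece $p_{i-1}\to p_i\to p_{i+1}$ of $\gamma$ by a strictly shorter segment from $p_{i-1}$ to $p_{i+1}$. Inserting $C(I,M)$ produces a new cube sequence, which is valid by Lemma \ref{lemma:valid}; passing through Step 4 can only refine the sequence without lengthening $\gamma$; and solving the new touring problem in Step 3 can only decrease the length further, since the strictly shorter path constructed above is an admissible competitor in the new touring problem. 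Thus the touring optimum strictly decreases.

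Combining these two facts, the sequence of touring optima produced in successive iterations is strictly decreasing, so by the uniqueness of the touring optimum per valid cube sequence, no valid cube sequence can be visited twice. Since there are only finitely many valid cube sequences in $X[v,w]$, the loop terminates after finitely many iterations. The main obstacle I anticipate is justifying the strict decrease cleanly: one must verify both that the inserted cube yields a valid cube sequence (invoking Lemma \ref{lemma:valid}) and that the savings detected locally by the Shortcut-Checking Algorithm translate into a strict improvement of the \emph{global} touring optimum, rather than being cancelled by adjustments elsewhere along $\gamma$. This follows because the shortcut gives a globally admissible path in the new sequence that is strictly shorter than $\gamma$, and the new touring optimum can only be shorter still.
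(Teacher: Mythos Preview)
Your proposal is correct and follows essentially the same strategy as the paper: strict decrease of the touring optimum at each iteration, combined with finiteness of the set of valid cube sequences, forces termination. Your version is more explicit than the paper's (you spell out the uniqueness of the touring optimum per sequence and justify carefully that the local shortcut yields a global strict decrease), but the underlying argument is the same.
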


\begin{proof}
This algorithm always finds a strictly shorter path, and this path is always the shortest path through its respective valid cube sequence.  Thus the algorithm cannot return to a previously encountered valid cube sequence.  Since there are only a finite number of cubes in the CAT(0) complex, there are only a finite number of valid cube sequences.
\end{proof}

\subsection{Proof of Correctness of Algorithm \ref{alg:geodesic}}\label{sec:proof} We have already done most of the work to show this. We need the following lemma.

\begin{lemma}\label{lemma:valid}
After applying the Shortcut-Checking algorithm in step 5a, the resulting cube sequence is still valid.
\end{lemma}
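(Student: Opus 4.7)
The plan is to verify the three conditions defining a valid cube sequence for the augmented sequence $P_1, \ldots, P_i, C, P_{i+1}, \ldots, P_k$. Since the cubes $P_j$ with $j \neq i, i+1$ are unchanged, only the local structure around the insertion needs attention. Write $C = C(I,M)$ for the inserted cube, with $M = (M_i \cap M_{i+1}) \cup B_i \cup A_{i+1}$ and $I = I_i \cup A_{i+1} = I_{i+1} \setminus B_{i+1}$; the two expressions for $I$ agree because $I_{i+1} - I_i$ and $M_{i+1} - M_i$ coincide in any valid cube sequence, a fact that follows from condition (3) of validity together with the observation that each $M_{j-1}$ must in fact equal $(I_{j-1})_{\max}$ (otherwise a maximal element missing from $M_{j-1}$ would witness a larger antichain of $Q$ containing $M_{j-1}$).

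Conditions (2) and (3), along with well-definedness of $C$, are relatively routine. For (3), $M$ is a maximal antichain of $Q$ directly by construction: $B_i$ and $A_{i+1}$ together form the complement of a minimum-weight vertex cover in the bipartite comparability graph on $(M_i - M_{i+1}) \cup (M_{i+1} - M_i)$, so they form an independent set there, which together with the antichain structure of $M_i$ and $M_{i+1}$ makes $M$ an antichain, and the vertex-cover argument upgrades this to maximality in $Q$. For (2), $I - I_i = A_{i+1} \subseteq M$ and $I_{i+1} - I = B_{i+1} \subseteq M_{i+1} - M_i \subseteq M_{i+1}$. For well-definedness, $I$ is a consistent order ideal as a subset of $I_{i+1}$, with downward closure holding because any element $q < a$ with $a \in A_{i+1} \subseteq M_{i+1}$ lies in $I_{i+1}$ but cannot itself lie in $M_{i+1}$ by the antichain property, hence $q \in I_i \subseteq I$; and $M \subseteq I_{\max}$ by an analogous direct inspection using the same antichain and independent-set properties.

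The main obstacle is condition (1): the strictness $I_i \subsetneq I \subsetneq I_{i+1}$. The inclusions $I_i \subseteq I \subseteq I_{i+1}$ are immediate from the formulas for $I$. For strictness, suppose for contradiction that $A_{i+1} = \emptyset$ (the case $B_{i+1} = \emptyset$ is symmetric, using $M \subseteq (I_{i+1})_{\max} = M_{i+1}$ to force $M = M_{i+1}$). Then $M = (M_i \cap M_{i+1}) \cup B_i$ is a subset of $M_i$, and since $M$ and $M_i$ are both maximal antichains of $Q$, equality forces $B_i = M_i - M_{i+1}$ and hence $A_i = \emptyset$. But then both partitions $A_i \cup B_i$ and $A_{i+1} \cup B_{i+1}$ are trivial, violating the hypothesis of Lemma~\ref{lem:noShortcut} that at least one partition be non-trivial; this rules out the possibility that the Shortcut-Checking Algorithm returned a genuine shortcut at step 5(a), contradicting our setting. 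Hence both inclusions are strict, completing the verification.
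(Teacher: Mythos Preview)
Your proof is correct and follows essentially the same outline as the paper's: verify the three defining conditions of a valid cube sequence at the insertion point. The paper's version is terser---it simply asserts that the strict inclusions $I_i \subsetneq I \subsetneq I_{i+1}$ and the maximality of $M$ ``hold because the Shortcut-Checking algorithm returns a maximal cube in $X[P_i,P_{i+1}]$'', and then checks $I\setminus I_i \subseteq M$ and $I_{i+1}\setminus I \subseteq M_{i+1}$ abstractly via $I\setminus I_i \subseteq I_{i+1}\setminus I_i \subseteq M_{i+1} = (I_{i+1})_{\max}$.

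The main difference is one of execution rather than strategy. You work directly with the explicit formulas $I = I_i \cup A_{i+1} = I_{i+1}\setminus B_{i+1}$ and $M = (M_i\cap M_{i+1})\cup B_i\cup A_{i+1}$ (incidentally correcting what appears to be a typo in the paper's pre-lemma discussion, which writes $I = I_i \cup B_i$), and you derive the auxiliary facts $M_j = (I_j)_{\max}$ and $M_{i+1}-M_i = I_{i+1}-I_i$ that the paper uses without comment. Your treatment of strictness via the ``at least one partition non-trivial'' hypothesis of Lemma~\ref{lem:noShortcut} is more transparent than the paper's appeal to maximality of $C$ in the interval, which by itself does not obviously rule out $C = P_i$ or $C = P_{i+1}$. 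Both routes arrive at the same place; yours is simply more self-contained.
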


\begin{proof}
Let $C(I, M)$ be the maximal cube found by the Shortcut-Checking algorithm at step 5a.  First note that the first and last cubes are the same in all valid cube sequences.  Thus it remains to show that $I_i \subsetneq I \subsetneq I_{i+1}$, $I \bs I_i \subseteq M$, $I_{i+1} \bs I \subseteq M_{i+1}$, and that $M$ is a maximal anti-chain.  The first and last of these hold because the Shortcut-Checking algorithm returns a maximal cube in $X[P_i, P_{i+1}]$.  

We now show that $I \bs I_i \subseteq M$.  Since $I \subseteq I_{i+1}$, we have $I \bs I_i \subset I_{i+1} \bs I_i \subseteq M_{i+1}$, which is the set of all maximal elements of $I_{i+1}$.  Thus $I \bs I_i$ is contained in the set of all maximal elements of $I \in I_{i+1}$, and hence in $M$, which is this set by the maximality of $C(I,M)$.

Finally, since $I_{i+1} \bs I_i \subseteq M_{i+1}$ and $I_i \subset I$, then $I_{i+1} \bs I \subseteq M_{i+1}$.  
\end{proof}

The path returned by the Touring Problem satisfies the Zero-Tension Condition, since it is a geodesic within a valid cube sequence. Since step 5a always returns a valid cube sequence, the final path returned by this algorithm will satisfy the four properties of Theorem~\ref{thm:characterization}, and hence be the geodesic.

 \subsection{Complexity of the Algorithm}
 In analyzing the complexity of the algorithm, we will simply focus on whether each step requires time polynomial or exponential in the number of elements in the relevant poset, ignoring implementation details.  Let $n$ be the number of elements in the input poset $P$, and let $m$ be the number of elements in the poset $Q$ corresponding to the interval $X[v,w]$.
 
To initialize the algorithm, we need to determine $v$ and $w$, reroot the poset at $v$, and then determine the poset $Q$ such that $X[v,w] \cong X_Q$.  We determine $v$ and $w$ by first finding $V$ and $W$, the minimal cubes containing them, by checking which coordinates are strictly between 0 and 1, and then using the procedure explained at the beginning of Section 4.  Since each step can be done by running through the coordinates or elements of $P$, this takes time polynomial in $n$.  

Finding the starting extended normal cube sequence takes time polynomial in $m$, since we do this by repeatedly pruning off all minimal elements of $Q$. To determine the minimal elements there are only ${m \choose 2}$ comparisons to be made, and the number of repetitions is bounded by $m$.

Note that a valid cube sequence contains at most $m$ cubes.   
Thus each step of the iterative part of the algorithm is also polynomial in $m$, as mentioned above, including solving the touring problem for a given valid cube sequence ($O(m^8 \log \frac{1}{\epsilon})$), checking the No Shortcut Condition at each breakpoint ($O(m^3)$), and constructing a new valid cube sequence ($O(m)$).  So each iteration is polynomial in $m$.  

The only part of the algorithm which is potentially non-polynomial is the number of iterations, which is bounded above by the number of valid cube sequences. 

\begin{pr}
Is there a polynomial bound on the number of valid cube sequences that the algorithm visits? 
\end{pr}


\section{Algebraic Complexity of Geodesics}\label{sec:algebra}

In this section, we explore the algebraic complexity of computing geodesics.  According to the  Zero-Tension Condition of Lemma  \ref{lem:localzero}, the geodesic will be the solution to the algebraic system of equations (\ref{eq:zerotension}) in the valid cube sequence in which the geodesic lies.  If the starting and ending points of the geodesic are generic rational numbers, the transition points will be algebraic numbers of a fixed degree that depends only on the poset and the partial linear extension corresponding to the cell-face sequence.  In this section, we explore this algebraic complexity using computational algebra.  This also leads to some (open) classification problems on posets.

For simplicity, we restrict to posets which have the property that the CAT(0) complex $X_{P}$ has exactly one valid cube sequence.  In such a poset $P$, the partial order on maximal antichains 
induced by the distributive lattice $J(P)$ is a linear order.  For this reason, we call these \emph{ascending antichain posets}.

The ascending antichain posets have a unique sequence of maximal cells going from the smallest to the largest cell in the complex $X_{P}$.  These are the posets whose complexes $X_{P}$ arise as valid cube sequences for geodesics in arbitrary CAT(0) complexes.  However, it is not true, in general, that in the cube complex $X_{P}$ of an ascending antichain poset $P$ there exists a pair of points and a geodesic connecting them that passes through the interior of every maximal cell.  We call such ascending antichain posets \emph{bent}, and ascending antichain posets that are not bent are \emph{straight}.

\begin{ex}
The poset with $6$ elements and covering relations
$ 1 \prec 2$, $2 \prec 4$, $2 \prec 5$, $3 \prec 5$, $5 \prec 6$, is an ascending antichain poset.  The maximal antichains in the poset are $\{1,3\}, \{2,3\}, \{3,4\}, \{4,5\}, \{4,6\}$, in that order.  However, this poset is a bent ascending antichain poset because the geodesic from any point in $C(\{1,3\}, \{1,3\})$ to any point in $C(\{1,2,3,4,5,6\}, \{4,6\}) $ does not pass through the relative interior of the maximal cube $C( \{1,2,3,4\}, \{3,4\})$.
\end{ex}

This leads to a problem about posets, a positive solution to which could help rule out valid cube sequences which are globally unnecessary to check.

\begin{pr}
Characterize the straight ascending antichain posets.  More generally, develop a characterization that, given an arbitrary poset $P$, describes the valid cube sequences in which there exists a geodesic  from a point in the bottom cell to a point in the top cell in $X_{P}$ intersecting the interiors of all maximal cells.  
\end{pr}

Suppose that $P$ is a straight ascending antichain poset.  Let $x$ and $y$  be in the bottom and top cells, such that the geodesic from $x$ to $y$ intersects the interior of each maximal cell.  We want to study the algebraic degree of the coordinates of the breakpoints of the geodesic from $x$ to $y$.  This number only depends on $P$ and not on the particular choice of $x$ and $y$, assuming that $x$ and $y$ are generic.  We denote this number $gd(P)$, the \emph{geodesic degree} of $P$.  Beyond the value of the $gd(P)$, we are also interested in knowing the Galois group of the extension if possible.  If the Galois group is complicated, we expect that there should be no especially straightforward algorithm to compute the geodesic besides using the touring problem solution.

First we catalogue some results about the geodesic degree of posets in simple cases.

\begin{prop}\label{prop:2level}
Let $P$ be a straight ascending antichain poset such that every maximal chain has length $1$ or $2$.  Then $gd(P) = 2^n$ for some $n$ and the Galois group of the extension field has the form $\zz_2^n$ for some $n$.  In particular, the coordinates of geodesics lie in a field obtained by adjoining square roots of rational numbers. 
\end{prop}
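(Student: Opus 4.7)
The plan is to analyze the Zero-Tension equations of Lemma~\ref{lem:localzero} for the unique valid cube sequence $C(I_1,M_1),\ldots,C(I_k,M_k)$ that supports the geodesic from $x$ to $y$. First, from the standard embedding, the breakpoints $p_i$ have coordinates $(p_i)_j = 1$ for $j \in (I_i - M_i) \cup (M_i - M_{i+1})$, $(p_i)_j = 0$ for $j \in P - I_{i+1}$, and $(p_i)_j \in [0,1]$ free only for $j \in M_i \cap M_{i+1}$. For such $j$, the zero-tension equation reads $((p_i)_j - (p_{i-1})_j)/\ell_{i-1} = ((p_{i+1})_j - (p_i)_j)/\ell_i$ with $\ell_i = \norm{p_{i+1}-p_i}$, so each shared coordinate at $p_i$ is the $(\ell_{i-1},\ell_i)$-weighted average of the corresponding coordinates at $p_{i-1}$ and $p_{i+1}$.

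The next step is to eliminate these shared coordinates. Split each tangent vector into its shared and exclusive parts and define $U_i = (p_i-p_{i-1})|_{M_i-M_{i+1}}$, $U'_i = (p_{i+1}-p_i)|_{M_i \cap M_{i+1}}$, and $V'_i = (p_{i+1}-p_i)|_{M_{i+1}-M_i}$. Combining the zero-tension relation with the Pythagorean identities $\ell_{i-1}^2 = \norm{U_i}^2 + \norm{V_i}^2$ (where $V_i = (p_i-p_{i-1})|_{M_i \cap M_{i+1}}$) and $\ell_i^2 = \norm{U'_i}^2 + \norm{V'_i}^2$ yields the scalar relation $\ell_{i-1}\norm{V'_i} = \ell_i \norm{U_i}$. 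Here is where the two-level hypothesis enters crucially: because every maximal chain has length at most $2$, elements in $M_i - M_{i+1}$ must be level-$1$ elements that are being covered at step $i$, so the relevant ``advancement'' at each transition is of a particularly restricted form.

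After substituting the weighted-average formula for the shared coordinates, the plan is to show by induction on $i$ that the $\ell_i^2$ form a triangular system: each $\ell_i^2$ is the solution of a quadratic equation whose coefficients lie in $\qq(x,y,\ell_1^2,\ldots,\ell_{i-1}^2)$. This yields a tower of fields $\qq(x,y) = K_0 \subseteq K_1 \subseteq \cdots \subseteq K_{k-1}$ with $K_i = K_{i-1}(\sqrt{\delta_i})$ for some $\delta_i \in K_{i-1}$, and all breakpoint coordinates lie in $K_{k-1}$. The Galois group of $K_{k-1}$ over $\qq(x,y)$ is then an elementary abelian $2$-group $\zz_2^n$, so $gd(P) = 2^n$.

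The main obstacle is the triangular-structure argument in the third step. A priori the norms $\norm{U_i}$ and $\norm{V'_i}$ involve shared coordinates of $p_{i-1}$ and $p_{i+1}$ respectively, so the equations are tightly coupled. I expect that the two-level hypothesis, together with the unique valid cube sequence provided by the straight ascending antichain structure, forces enough of these couplings to collapse: an element of $M_i - M_{i+1}$ is removed precisely because a level-$2$ element above it appears, and tracking this through the partial-linear-extension picture of Section~\ref{sec:combin.geodesics} should show that the same shared coordinate never contributes non-trivially to two different exclusive blocks in a way that breaks triangularization. Making this bookkeeping rigorous, and ruling out ``accidental'' linear dependencies that would raise the degree above a power of $2$, is the technical heart of the proof.
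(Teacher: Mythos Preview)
Your proposal is a plan rather than a proof: the decisive step---showing that the $\ell_i^2$ satisfy a \emph{triangular} system of quadratics---is explicitly left as ``the technical heart'' that you have not carried out. This is a genuine gap, and I do not think the argument can be completed along the lines you suggest. The Zero-Tension equations are \emph{tridiagonal}, not triangular: the condition at $p_i$ involves $p_{i-1}$, $p_i$, and $p_{i+1}$, so $\ell_i = \norm{p_{i+1}-p_i}$ depends on $p_{i+1}$, which in turn is constrained by $p_{i+2}$, and so on. There is no reason to expect that $\ell_i^2$ satisfies a quadratic whose coefficients lie in $\qq(x,y,\ell_1^2,\ldots,\ell_{i-1}^2)$; the two-level hypothesis does not decouple the forward dependence. (Your remark that an element of $M_i-M_{i+1}$ ``must be a level-$1$ element'' is also not correct: a level-$2$ element can appear in $M_i$ and then be replaced in $M_{i+1}$ by another level-$2$ element.)

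The paper's argument is entirely different and much shorter. The two-level hypothesis is exactly what places the problem in the setting of the tree-space algorithm of \cite{Owen09} (cf.\ also \cite{MillerOwenProvan10}): the interval $X_Q$ is a truncated orthant space, and that algorithm solves the geodesic by \emph{unfolding} the cube sequence into a single Euclidean space, after which the geodesic is the straight segment between two modified endpoints $\tilde x$ and $\tilde y$. The modification replaces certain blocks of coordinates by their Euclidean norms, so the coordinates of $\tilde x,\tilde y$ are square roots of rational expressions in $x,y$. Intersecting a straight line through such points with rational hyperplanes keeps one in the same multiquadratic field, giving the $\zz_2^n$ Galois group. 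The mechanism is thus a geometric unfolding, not an algebraic triangularization of the Zero-Tension system; if you want a self-contained argument, you should reproduce that unfolding rather than look for an inductive tower in the $\ell_i$.
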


\begin{proof}
If a
straight ascending antichain poset
 $P$ satisfies the condition that every maximal chain has length $1$ or $2$, then the geodesic in $P$ is solved in linear time by the algorithm in \cite{Owen09}.  This algorithm reduces the computation to computing a line between two points in a Euclidean space after possibly modifying the problem by replacing the points $x$ and $y$ with two new points whose coordinates are at worst square roots of rational numbers.  The intersection of a line connecting two such points with a rational plane lies in the same field, and the Galois group of the normal closure of the field containing these points has the desired form.
\end{proof}

\begin{prop}\label{prop:dim2}
Let $P$ be a straight ascending antichain poset such that the size of the largest antichain is $2$.  Then $gd(P) = 1$.  Thus, in a two dimensional CAT(0) complex, the coordinates  of the break points on the geodesic between two points with rational coordinates are all rational.
\end{prop}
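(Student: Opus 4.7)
The approach is to unfold the valid cube sequence containing the geodesic into the Euclidean plane, showing that (i) the unfolding is an isometric embedding of each individual cube that preserves rational coordinates, and (ii) the geodesic becomes a straight segment in the unfolded picture. Rationality of the break points then follows from the elementary observation that the segment joining two rational points of $\rr^2$ meets any axis-parallel integer-coordinate line at a rational point.

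Since the largest antichain of $P$ has size $2$, every cube of $X_P$ is at most $2$-dimensional, and by straightness the geodesic $\gamma$ from $x$ to $y$ lies in a unique valid cube sequence $P_1, \ldots, P_k$ of unit squares (or lower-dimensional faces) glued along shared faces. I would build an isometric map $\iota \colon P_1 \cup \cdots \cup P_k \to \rr^2$ inductively: place $\iota(P_1)$ as an axis-aligned unit square with integer-coordinate corners; having placed $\iota(P_1), \ldots, \iota(P_i)$ as such squares, place $\iota(P_{i+1})$ as the unique axis-aligned unit square sharing the edge $\iota(P_i \cap P_{i+1})$ with $\iota(P_i)$ and lying on the opposite side. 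Each extension is a rigid motion whose rotational part is a multiple of $\pi/2$ and whose translational part is an integer vector, so $\iota$ carries the rational coordinates of $x$, $y$, and of every separating edge $\iota(P_i \cap P_{i+1})$ to rational coordinates in $\rr^2$.

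The Zero-Tension Condition of Lemma \ref{lem:localzero} says that at each break point $p_i$ on the shared edge $F_i = P_i \cap P_{i+1}$, the incoming and outgoing unit tangents have equal projections onto $F_i$. Because $\iota$ places $\iota(P_{i+1})$ on the side of $\iota(F_i)$ opposite to $\iota(P_i)$, the normal components of these two unit tangents also align after unfolding, so the two tangent directions agree at $p_i$ and $\iota(\gamma)$ is the straight segment from $\iota(x)$ to $\iota(y)$. The break points of $\gamma$ correspond to intersections of this straight segment with the axis-aligned integer-coordinate edges $\iota(F_i)$, which are rational points of $\rr^2$; pulling back through $\iota$ gives rational break points in $X_P$, so $gd(P) = 1$.

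The main obstacle I anticipate is making the third step rigorous: verifying that the Zero-Tension identity, which only constrains the tangential components of the unit tangents, truly forces $\iota(\gamma)$ to be straight after unfolding. This relies on both that consecutive squares $\iota(P_i)$ and $\iota(P_{i+1})$ sit on opposite sides of $\iota(F_i)$ and that $\gamma$ genuinely crosses (rather than grazes) $F_i$, the latter being a consequence of the strictly increasing order ideals along a valid cube sequence. A secondary care-point is that the global unfolded image may self-overlap if the sequence bends back on itself; this is harmless, since $\iota$ need only be an isometry on each cube $P_i$ individually, not a global embedding of $X_P$.
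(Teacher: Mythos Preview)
Your unfolding approach is essentially the paper's argument made explicit: the paper simply observes that at an edge break point the Zero-Tension Condition forces the geodesic to be a straight line through that point, from which rationality follows. Your global unfolding packages this cleanly.

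There is, however, one genuine gap that the paper handles in a single sentence and you do not: the shared face $F_i = P_i \cap P_{i+1} = C(I_i, M_i \cap M_{i+1})$ need not be an edge. In a width-$2$ poset one can have $M_i \cap M_{i+1} = \emptyset$ (for instance whenever $|I_{i+1} \setminus I_i| = 2$, so that $M_{i+1} = I_{i+1} \setminus I_i$ is disjoint from $I_i \supseteq M_i$), and then consecutive squares meet only at a vertex. Your instruction ``place $\iota(P_{i+1})$ as the unique axis-aligned unit square sharing the edge $\iota(F_i)$'' then has no meaning, and more to the point the Zero-Tension Condition at a $0$-dimensional $F_i$ is vacuous---the projection onto a point is zero---so it imposes no constraint and the geodesic may genuinely bend there. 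The fix is easy and is exactly what the paper does: such a vertex break point has integer coordinates and is therefore rational; split the geodesic at the vertex break points and apply your unfolding argument to each segment between consecutive ones, where now every $F_i$ really is an edge and your reasoning goes through verbatim.
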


\begin{proof}
If the size of the largest antichain is $2$, then each maximal cell has dimension $2$ or one.  Each intermediate point in a geodesic is either on an edge, or at a vertex of the complex.  If an intermediate point is at a vertex of the complex, it is integral, and hence rational.  If an intermediate point is on an edge of the complex, the zero tension condition implies that the geodesic is a straight line through that point.  Hence the point must have rational coordinates.
\end{proof}

Proposition \ref{prop:dim2} can be extended to handle straight ascending antichain posets where all maximal antichains are of the same size and the break points always lie in the interior of a codimension 1 cell.  In this case, the geodesic must be a straight line, and hence, $gd(P) = 1$.  

At this point, it seems natural to ask whether or not every poset has such a simple description of its geodesics.  That is, is the Galois group of the geodesic always $\zz_2^n$ for some $n$, with the field extension obtained by adjoining square roots of rational numbers?  We were surprised to find that this fails in the simplest example that does not satisfy Propositions \ref{prop:2level} or \ref{prop:dim2}.

\begin{prop}
Let $P$ be the poset on five elements with covering relations $1 \prec 3$, $1 \prec 4$, $2 \prec 5$, $3 \prec 5$.  Then $gd(P) = 8$ and the splitting field which contains all coordinates of the geodesic has Galois group $S_8$.  In particular, the break points on the geodesic cannot be expressed in terms of radicals.
\end{prop}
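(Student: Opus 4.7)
The first task is to unpack the combinatorial structure of $X_P$. The maximal antichains of $P$ are $\{1,2\}$, $\{2,3,4\}$, and $\{4,5\}$, whose order ideals form the strict chain $\{1,2\}\subset\{1,2,3,4\}\subset\{1,2,3,4,5\}$. Thus $P$ is an ascending antichain poset whose unique valid cube sequence consists of the three maximal cubes $C_1=C(\{1,2\},\{1,2\})$, $C_2=C(\{1,2,3,4\},\{2,3,4\})$, $C_3=C(\{1,2,3,4,5\},\{4,5\})$, of dimensions $2,3,2$. In the standard embedding, $C_1$ is the $(x_1,x_2)$-square at $x_3=x_4=x_5=0$, $C_2$ is the $(x_2,x_3,x_4)$-cube at $x_1=1$, $x_5=0$, and $C_3$ is the $(x_4,x_5)$-square at $x_1=x_2=x_3=1$. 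The shared faces between consecutive cubes are the edges $C_1\cap C_2=\{(1,t_1,0,0,0):t_1\in[0,1]\}$ and $C_2\cap C_3=\{(1,1,1,t_2,0):t_2\in[0,1]\}$. For a generic rational pair $x=(a,b,0,0,0)\in C_1^\circ$ and $y=(1,1,1,c,d)\in C_3^\circ$ the geodesic's two breakpoints fall in the interiors of these edges, so $P$ is straight.

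\textbf{Reducing to a univariate polynomial.} The Zero-Tension Condition (Lemma~\ref{lem:localzero}) at the breakpoints $p_1=(1,t_1,0,0,0)$ and $p_2=(1,1,1,t_2,0)$, after squaring, becomes
\[
(t_1-b)^2(1+t_2^2) = (1-t_1)^2(1-a)^2, \qquad t_2^2 d^2 = (c-t_2)^2\bigl((1-t_1)^2+1\bigr).
\]
Setting $p(t_1)=(1-t_1)^2(1-a)^2-(t_1-b)^2$, $q(t_1)=(t_1-b)^2$, and $\alpha(t_1)=(1-t_1)^2+1$, the resultant of these two equations with respect to $t_2$ equals
\[
f_{a,b,c,d}(t_1)=\bigl[\alpha(c^2 q+p)-pd^2\bigr]^2-4c^2\, p\, q\, \alpha^2.
\]
A direct expansion gives $\deg_{t_1} f = 8$ with leading coefficient $\bigl(c^2-(1-a)^2+1\bigr)^2$, nonzero for generic $(a,b,c,d)$; hence $gd(P) \leq 8$.

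\textbf{Identifying the Galois group as $S_8$.} To pin down the Galois group of the generic splitting field, specialize $(a,b,c,d)$ to a simple rational tuple lying off the discriminant locus of $f$ and carry out the following standard verification, implementable in any computer algebra system: (i) check that $f_{a,b,c,d}$ is irreducible over $\mathbb{Q}$, so the Galois group $G$ of the specialization is a transitive subgroup of $S_8$; (ii) find a prime $p_1$ at which $f$ has squarefree reduction containing an irreducible factor of degree $5$ (for instance, factorization type $5+3$, $5+2+1$, or $5+1+1+1$)---by Dedekind's theorem, $G$ then contains an element whose cycle decomposition includes a $5$-cycle, and raising it to a suitable power yields a pure $5$-cycle; since $5$ is prime and $5>8/2$, a classical theorem in permutation group theory guarantees that a transitive subgroup of $S_8$ containing a $5$-cycle must contain $A_8$, giving $G\supseteq A_8$; (iii) find another prime $p_2$ at which $f$ has squarefree reduction with an odd number of even-degree irreducible factors, producing an odd element of $G$ by Dedekind, so $G\not\subseteq A_8$. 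Combining (i)--(iii) forces $G=S_8$ for the specialization. Since Galois groups can only shrink under rational specialization, the generic Galois group over $\mathbb{Q}(a,b,c,d)$ then contains, hence equals, $S_8$. In particular $f$ is generically irreducible of degree $8$, so $gd(P)=8$; the nonsolvability of $S_8$ rules out any expression of the breakpoint coordinates in radicals.

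\textbf{Main obstacle.} The only delicate step is the mod-$p$ search for primes exhibiting the required cycle types; this is a straightforward but slightly tedious finite search among small primes once a suitable rational specialization is fixed and its discriminant factored. All other ingredients---the Zero-Tension equations, the resultant elimination, and Dedekind's theorem---are routine.
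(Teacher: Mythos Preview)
Your proposal is correct and follows essentially the same route as the paper: set up the two Zero-Tension equations at the breakpoints $(1,t_1,0,0,0)$ and $(1,1,1,t_2,0)$, eliminate one variable to obtain a degree-$8$ polynomial, specialize the parameters to exhibit Galois group $S_8$, and invoke semicontinuity to conclude for generic $(a,b,c,d)$. Your simplified equations and the explicit resultant $f_{a,b,c,d}(t_1)$ with leading coefficient $(c^2-(1-a)^2+1)^2$ are correct and match what the paper obtains (the paper eliminates in the other variable via a Gr\"obner basis in Singular rather than writing the resultant by hand, but the content is identical).

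The only methodological difference is in the Galois-group verification: the paper simply reports that Maple certifies $S_8$ at a specific specialization, whereas you outline a Dedekind/Jordan argument (irreducibility plus a $5$-cycle forces $G\supseteq A_8$, then an odd Frobenius element rules out $A_8$). Your outline is sound---transitivity together with a $5$-cycle in $S_8$ forces primitivity, and Jordan's theorem then gives $G\supseteq A_8$ since $5\le 8-3$---but, like the paper, you stop short of exhibiting the actual primes, so the two proofs are on equal evidentiary footing.
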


\begin{proof}
The CAT(0) complex $X(P)$ consists of a three dimensional cube with two squares attached on skew, nonneighboring edges.  In the standard embedding, we must find the geodesic between the fixed points of the form $(a,b,0,0,0)$ and $(1,1,1,c,d)$ passing through points $(1,x,0,0,0)$ and $(1,1,1,y,0)$.  Applying the  Zero-Tension Condition yields the system of two algebraic equations
$$
\frac{b-x}{ \sqrt{(a-1)^2 + (b-x)^2} }  =  \frac{x-1}{\sqrt{(x-1)^2 + y^2 + 1}} 
\quad , 
\quad 
\frac{y}{ \sqrt{(x-1)^2 + y^2 + 1}} = \frac{y-c}{\sqrt{ (y-c)^2 + d^2 }}.
$$ 
Squaring both sides and clearing denominators yields a polynomial system.  The following Singular \cite{DGPS} computation shows that there are generically eight solutions to this system of equations.

\begin{verbatim}
ring R = (0,a,b,c,d),(x,y), dp;
ideal i =
(b-x)^2*((x-1)^2 + y^2 + 1) - (x-1)^2*((a-1)^2 + (b-x)^2),
y^2*((y-c)^2 + d^2) - (y-c)^2*( (x-1)^2 + y^2 + 1);
ideal j = std(i);
degree(j);
\end{verbatim}

Making a specific choice of values for $a,b,c$, and $d$, we compute a lexicographic Gr\"obner basis for the ideal, which contains a polynomial of degree $8$ in the variable $y$.  Plugging this polynomial into Maple we verify that this polynomial, and hence the polynomial system, has Galois group $S_8$.  This implies that the system is not solvable by radicals.  Since the order of the Galois group is upper-semicontinuous in the parameters, we see that this system is not solvable in radicals for generic choices of the parameters.
\end{proof}

These calculations show that there will be no simple closed form formulas for the breakpoints in the geodesic, and suggest that any algorithm for computing geodesics should depend on iteration in some way.


\bibliographystyle{plain}

\bibliography{geodesic}

\end{document}